\newcommand\mymathop[1]{\mathop{\operatorname{#1}}}
\newtheoremstyle{plainindent}
  {\topsep}   % ABOVESPACE
  {\topsep}   % BELOWSPACE
  {\itshape}  % BODYFONT
  {\parindent}% INDENT (empty value is the same as 0pt)
  {\bfseries} % HEADFONT
  {.}         % HEADPUNCT
  {5pt plus 1pt minus 1pt} % HEADSPACE
  {}          % CUSTOM-HEAD-SPEC
\theoremstyle{plainindent}
\newtheorem{theorem}{Theorem}
\newtheorem{corollary}{Corollary}[theorem]
\newtheorem{conjecture}{Conjecture}
\newtheorem{proposition}{Proposition}
\newcommand\myshade{85}
\colorlet{mylinkcolor}{blue}
\colorlet{mycitecolor}{red}
\title{\textsc{ a generalized ramanujan master theorem and integral representation of meromorphic functions}}
\author{\textsc{Zachary P. Bradshaw\thanks{E-mail: zbradshaw@tulane.edu}
 \,and Omprakash Atale\thanks{E-mail: atale.om@outlook.com}}}
\affil{\small ${}^{*}$\textit{Naval Surface Warfare Center, Panama City Division} \\ \textit{Panama City, FL, United States } \\${}^{\dagger}$\textit{Department of Mathematics, Savitribai Phule Pune University,}\\ \textit{Pune-425001, India}}
\begin{document}
\maketitle
\begin{abstract}
   Ramanujan's Master Theorem is a decades-old theorem in the theory of Mellin transforms which has wide applications in both mathematics and high energy physics. The unconventional method of Ramanujan in his proof of the theorem left convergence issues which were later settled by Hardy. Here we extend Ramanujan's theorem to meromorphic functions with poles of arbitrary order and observe that the new theorem produces analogues of Ramanujan's famous theorem. Moreover, we find that the theorem produces integral representations for meromorphic functions which are shown to satisfy interesting properties, opening up an avenue for further study.
\end{abstract}
\newpage
\fontsize{11.5}{16}\selectfont
\begin{center}
    \textsc{\textbf{\large \S I. Introduction}}
\end{center}
It is no secret that the famous Indian mathematician Srinivasa Ramanujan was known for his mathematical insight. However, without formal training in the subject, his work was largely non-rigorous, leaving much for future generations of mathematicians to complete. Here we will expand upon just one piece of Ramanujan's work, which he readily used to explore definite integrals. It is called Ramanujan's master theorem, the name of which can be attributed to Berndt \cite{3}, and it states that if $f(x)$ has an expansion of the form
\begin{equation*}
f(x)=\sum_{n=0}^{\infty}(-1)^{n} \frac{g(n)}{n !} x^{n}\tag{1.1}
\end{equation*}
where $g(n)$ has a natural and continuous extension such that $g(0) \neq 0$, then for $s>0$, we have
\begin{equation}\label{eq:rmtintro}
\int_{0}^{\infty} x^{s-1}\left(\sum_{n=0}^{\infty}(-1)^{n} \frac{g(n)}{n !} x^{n}\right) dx =g(-s)\Gamma(s).\tag{1.2}
\end{equation}

Eqn. \eqref{eq:rmtintro} can be applied to calculate the Mellin transform of many functions in the literature. Ramanujan first communicated this result to Hardy in his 1913 quarterly reports, but his method for proving it was incomplete. It started with Euler's integral representation of the gamma function and relied on Taylor's expansion without regard for convergence issues. This problem was remedied by Hardy when he made natural assumptions on the analyticity of $g(n)$ as well as its growth rate \cite{21} which facilitated a proof of the theorem using Cauchy's residue theory and the Mellin inversion theorem (see Theorem~\ref{thm:RMT}).

At present, Hardy's class of functions has been extended in many ways \cite{6,8}, and several generalizations of Ramanujan's theorem exist \cite{5,10}, including a multidimensional version known as the method of brackets \cite{bradshaw2023,bradshaw2023a,15,16,17,18,19}, which was discussed in \cite{23} and more recently in \cite{1}. This method has found use in the field of high energy particle physics and quantum field theory \cite{20,25} for its application to precision calculations involving Feynman diagrams. Indeed, the method of brackets is particularly well equipped to handle the Schwinger parametrization \cite{25} of loop integrals.

The purpose of this article is to point out that Hardy's proof can be generalized in a natural way to produce a collection of Ramanujan-like master theorems. We will see that the crux of Hardy's proof is a residue calculation followed by the Mellin inversion theorem \cite{7}, the former of which we will alter. In doing so, we find that the proof still holds under a modification of the growth conditions outlined by Hardy. As a result, we obtain many formulas for the Mellin transform of a function in terms of an analytic continuation of its coefficients, and these results therefore have applications to sequence interpolation.

The Mellin transform, being scale-invariant, is widely used in the analysis of computer algorithms \cite{14}. Moreover, in quantum theory, the Fourier transform is used to transform from position space to momentum space, where computations may be easier, and the Mellin transform plays an analogous role in the AdS/CFT correspondence \cite{11,12}. Additional applications include the asymptotic approximations of functions which are defined by integrals \cite{4,26} and series \cite{13,22}, the study of distributions of products and quotients of independent random variables \cite{9}, and the expression of solutions to electromagnetic wave propagation in turbulence \cite{24}. The results presented here are therefore of interest to a wide range of disciplines.

The remainder of this work is outlined as follows. In \S II, we review Ramanujan's master theorem and the proof given by Hardy, after which we discuss the interpretation of the theorem as a sequence interpolation formula. We then modify Hardy's proof in \S III, producing a family of Ramanujan-like master theorems, which we may interpret as interpolation formulas. In \S IV, we study independently some properties of the integral representation of $h(z)$. In particular, we show that it is logarithmically convex. We extend further in \S V to meromorphic functions with poles of arbitrary order, producing several exotic integral representations for meromorphic functions. Finally, in \S VI, we give concluding remarks.

\begin{center}
    {\large\textsc{\textbf{\S II. Ramanujan's Master Theorem}}}
\end{center}
We begin with Ramanujan's master theorem and its proof given by Hardy \cite{21}. Ultimately, the theorem gives an expression for the Mellin transform of a certain class of functions; though, it can also be viewed as an interpolation formula. Indeed, given a sequence $c_n$, we define a function $g:\mathbb{N}\to\mathbb{C}$ by $g(n)=c_n$ and ask how this function might be extended to a larger subset of the complex plane. It turns out that Ramanujan's master theorem gives an extension of this function in a unique way, and we will return to this point in a moment. For now, we turn to the theorem.

\begin{theorem}[\textbf{Hardy-Ramanujan}]\label{thm:RMT} Let $g(z)$ be analytic on the half-plane $H(\delta)=\{z\in\mathbb{C}|\textnormal{Re}(z)\ge-\delta\}$ for some $0<\delta<1$. Assume $A<\pi$ and $g$ satisfies the growth condition
\begin{align} |g(v+iw)|<Ce^{Pv+A|w|} \tag{2.1} \label{eq:growth}
\end{align}
for all $z=v+iw\in H(\delta)$. Then
\begin{align} \int_0^\infty x^{s-1}\sum_{k=0}^\infty(-1)^kg(k)x^kdx=\frac{\pi}{\sin(\pi s)}g(-s) \tag{2.2}\label{eq:rmt}
\end{align}
holds for all $0<\textnormal{Re}(s)<\delta$.
\end{theorem}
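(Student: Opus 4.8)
The plan is to recognize the right-hand side $\frac{\pi}{\sin(\pi s)}g(-s)$ as the Mellin transform we seek by running the argument in reverse: I would first show that the Mellin--Barnes integral $\phi(x)=\frac{1}{2\pi i}\int_{(c)}\frac{\pi}{\sin(\pi s)}g(-s)\,x^{-s}\,ds$, taken along a vertical line $\mathrm{Re}(s)=c$ with $c\in(0,\delta)$ fixed, reproduces the series $\sum_{k\ge 0}(-1)^k g(k)x^k$, and then invoke the Mellin inversion theorem to transfer back. Since $g$ is analytic on $H(\delta)$, the map $s\mapsto g(-s)$ is analytic on $\{\mathrm{Re}(s)\le\delta\}$, so the only singularities of the integrand in this half-plane are the simple poles of $\pi/\sin(\pi s)$ at the nonpositive integers $s=0,-1,-2,\dots$.

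The first step is to establish absolute convergence of $\phi$. Along $\mathrm{Re}(s)=c$ one has $|\sin(\pi(c+iw))|\sim \tfrac12 e^{\pi|w|}$ as $|w|\to\infty$, while the growth hypothesis \eqref{eq:growth} gives $|g(-c-iw)|<Ce^{-Pc+A|w|}$. The integrand is therefore $O\!\left(e^{(A-\pi)|w|}\right)$, and here the condition $A<\pi$ is exactly what forces exponential decay and hence convergence; this is the crux of the whole argument. Next I would push the contour leftward to $\mathrm{Re}(s)=-N-\tfrac12$, collecting the residues at $s=-k$. A short computation gives $\mathrm{Res}_{s=-k}\frac{\pi}{\sin(\pi s)}=(-1)^k$, so each pole contributes $(-1)^k g(k)x^k$, producing the partial sums of the target series. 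On the shifted line $\sin(\pi s)$ reduces to $\pm\cosh(\pi w)$, which is bounded below by $1$, so the same $A<\pi$ estimate, combined with the factor $x^{N+1/2}e^{P(N+1/2)}$, shows that the remainder integral tends to $0$ as $N\to\infty$ whenever $x<e^{-P}$. This identifies $\phi(x)$ with $\sum_{k\ge 0}(-1)^kg(k)x^k$ on a neighborhood of the origin.

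Finally, since $\phi$ is by construction the inverse Mellin transform of $\Phi(s)=\frac{\pi}{\sin(\pi s)}g(-s)$, and the decay established above places $\Phi$ within the hypotheses of the Mellin inversion theorem \cite{7}, taking the Mellin transform of $\phi$ recovers $\Phi$ on the strip $0<\mathrm{Re}(s)<\delta$; replacing $\phi$ by the series it equals then yields \eqref{eq:rmt}. The main obstacle I anticipate is the analytic bookkeeping rather than any single idea: rigorously justifying the leftward contour shift (vanishing of the horizontal segments together with the remainder estimate), interchanging summation and integration only on the region where the series converges, and verifying that $\Phi$ meets the precise growth and regularity conditions demanded by the inversion theorem. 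All of these hinge on the single inequality $A<\pi$, so carefully tracking the constants $A$, $P$, and $C$ throughout is where the real work lies.
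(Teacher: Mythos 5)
Your proposal is correct and follows essentially the same route as the paper: compute the Mellin--Barnes integral of $\frac{\pi}{\sin(\pi s)}g(-s)x^{-s}$ by collecting the residues $(-1)^k g(k)x^k$ at the nonpositive integers, using $A<\pi$ to control the integrand against $1/\sin(\pi s)$, and then invoke Mellin inversion. The only cosmetic difference is that you close the contour with vertical lines at $\mathrm{Re}(s)=-N-\tfrac12$ (where $|\sin(\pi s)|=\cosh(\pi w)\ge 1$) while the paper uses a left semicircle of radius $n$; both give the same vanishing-remainder estimate for $x<e^{-P}$.
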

\begin{proof} If $|x|<e^{-P}$, then the series converges by the root test. Let $0<c<\delta$ and consider the contour integral
\begin{equation*}
    \frac{1}{2\pi i}\int_C\frac{\pi}{\sin(\pi z)}g(-z)x^{-z}dz \tag{2.3}
\end{equation*}
where $C$ is the left semicircle contour of radius $n\in\mathbb{N}$ centered at $z=c$. The integral $I_\gamma$ along the arc $\gamma$ of the semicircle satisfies
\begin{align*} \lvert I_\gamma\rvert&=\bigg|\int_\gamma\frac{\pi}{\sin(\pi z)}g(-z)x^{-z}\ dz\bigg|\\
&=\bigg|\int_{\pi/2}^{3\pi/2}\frac{\pi g(-c-ne^{i\theta})x^{-c-ne^{i\theta}}}{\sin(\pi(c+ne^{i\theta}))}nie^{i\theta}d\theta\bigg|\\
&\le\int_{\pi/2}^{3\pi/2}\frac{C\pi ne^{An|\sin(\theta)|}d\theta}{\sqrt{\sin^2(\pi c+\pi n\cos(\theta))+\sinh^2(\pi n\sin(\theta))}}. \tag{2.4}
\end{align*} 
Now taking the limit, as $n\to\infty$, we see that the arc term has zero contribution to the integral.
The integrand has simple poles at $-n$ for all $n\in\mathbb{N}$ and the residues are 
\begin{align*}\lim_{z\to-n}\frac{\pi(z+n)g(-z)x^{-z}}{\sin(\pi z)}&=\pi\lim_{z\to-n}g(-z)\lim_{z\to-n}\frac{(z+n)}{x^{z}\sin(\pi z)}\\
&=\pi g(n)\lim_{z\to-n}\frac{1}{\ln(x)x^z\sin(\pi z)+\pi x^z\cos(\pi z)}\\
&=\pi g(n)\frac{1}{\pi(-1)^nx^{-n}}\\
&=(-1)^ng(n)x^n. \tag{2.5}
\end{align*}
Then by the residue theorem, we have
\begin{align}\frac{1}{2\pi i}&\int_{c-i\infty}^{c+i\infty}\frac{\pi g(-s)x^{-s}}{\sin(\pi s)}ds=\sum_{k=0}^\infty(-1)^kg(k)x^k. \label{eq:residueintegral} \tag{2.6}
\end{align}
The function $\frac{\pi}{\sin(\pi z)}g(-z)$ is analytic on $0<\textnormal{Re}(z)<\delta$ and the integral converges absolutely and uniformly for $c\in(a,b)$ and $0<a<b<\delta$. Then by the Mellin inversion theorem,
\begin{align*} \frac{\pi}{\sin(\pi s)}g(-s)=\int_0^\infty x^{s-1}\sum_{k=0}^\infty(-1)^kg(k)x^kdx \tag{2.7}
\end{align*}
for all $0<\textnormal{Re}(s)<\delta$.
\end{proof}

It is now clear how Ramanujan's master theorem can be viewed as an interpolation formula. Given $g(n)=c_n$ as before, we define $g(z)$ in the region $-\delta<\textnormal{Re}(z)<0$ by \eqref{eq:rmt}. We can do even better by analytically continuing the domain of validity of the theorem to the interval between any two consecutive negative integers (see Theorem 8.1 in \cite{1}) by shifting the contour in the proof of the master theorem. In shifting the vertical leg of the contour to the left, we drop some of the singularities of the reciprocal sine function from the contour, resulting in the interpolation formula
\begin{align}
\int_0^\infty x^{s-1}\sum_{k=N}^\infty(-1)^kg(k)x^k\ dx=\frac{\pi}{\sin(\pi s)}g(-s) \label{eq:extendedrmt} \tag{2.8}
\end{align}
in the region $-N<\textnormal{Re}(s)<-N+1$.

Note that both the assumption of analyticity on $H(\delta)$ and the growth condition \eqref{eq:growth} are unnecessary to achieve \eqref{eq:extendedrmt}. We may weaken these assumptions to hold only in the region we integrate, which shrinks with increasing $N$. Similarly, if $g(n)$ is defined for $n\in\mathbb{Z}_{<0}$, by shifting the vertical leg of the contour to the right and strengthening Hardy's assumptions in the opposite way, we recover the interpolation formula
\begin{align*}
\int_0^\infty x^{s-1}\sum_{k=-N}^\infty(-1)^kg(k)x^k\ dx=\frac{\pi}{\sin(\pi s)}g(-s) \tag{2.9}
\end{align*}
in the region $N<\textnormal{Re}(s)<N+1$.

Observe that the main idea of Hardy's proof is to apply the residue theorem to compute the integral \eqref{eq:residueintegral} and then apply the Mellin inversion theorem. It happens that for the choice of $\pi/\sin(\pi s)$ in the integrand, the residue computation is such that the result of \eqref{eq:residueintegral} is a power series in $x$, giving Ramanujan's master theorem the particularly nice form \eqref{eq:rmt}. However, nothing is stopping us from replacing this function with another singular function at the non-positive integers.

With this in mind, let us replace $\pi/\sin(\pi s)$ by a meromorphic function $h(s)$ with poles at the non-positive integers. The result of the residue calculation cannot be computed explicitly without additional information about $h$, but it is clear from Hardy's proof that after applying the Mellin inversion theorem, the power series in the integrand will now have the form
\begin{align*}
\sum_{k=0}^\infty\mymathop{Res}_{z=-k}\left(\frac{h(z)g(-x)}{x^z}\right) \tag{2.10}
\end{align*}
so that the result of the theorem will be of the form
\begin{align}\label{eq:formula-type}
\int_0^\infty x^{s-1}&\sum_{k=0}^\infty\mymathop{Res}_{z=-k}\left(\frac{h(z)g(-x)}{x^z}\right)dx=h(s)g(-s). \tag{2.11}
\end{align}
However, we have glossed over the fine details. For one, the region of convergence of the series will in general change with $h$. More importantly, the growth condition \eqref{eq:growth} is no longer enough to guarantee that the contribution from the arc term $I_\gamma$ of the contour integral falls off at infinity. Therefore, we will have to be careful to tweak the conditions on $g$ according to the chosen $h$ when constructing analogs of Ramanujan's theorem.

\begin{center}
    \textsc{\textbf{\S III. Simple poles}}
\end{center}
Under the assumption that the poles of $h(z)$ are simple, the formula \eqref{eq:formula-type} simplifies quite nicely, and it is not too hard to adjust Hardy's proof to this more general setting. Indeed, we have the following theorem for meromorphic functions with simple poles at the non-positive integers. Let $\phi(z)$ denote the analytic continuation of $\sin(\pi z)h(-z)$ to the non-negative integers; there are removable singularities here which are otherwise problematic when trying to evaluate this function.

\begin{theorem}[\textbf{Main Theorem}]\label{thm:meromorphic-formula}
Let $g(z)$ be analytic on the half-plane $H(\delta)=\{z\in\mathbb{C}|\textnormal{Re}(z)\ge-\delta\}$ for some $0<\delta<1$, and let $h(z)$ be meromorphic with simple poles at the non-positive integers. Assume $A<\pi$ and $g$ satisfies the growth condition
\begin{align} \lvert g(v+iw)\rvert<\frac{Ce^{Pv+A|w|}}{\lvert\phi(v+iw)\rvert} \label{eq:growthsimple} \tag{3.1}
\end{align}
for all $z=v+iw\in H(\delta)$ and that
\begin{align}\label{eq:limit-property}
    \lim_{k\to\infty}\left|\frac{\textnormal{Res}_{-k}(h(z))}{\phi(k)}\right|^{1/k}=L\ge1.\tag{3.2}
\end{align} Then
\begin{align*}\int_0^\infty x^{s-1}&\sum_{k=0}^\infty\mymathop{Res}_{z=-k}\left(h(z)\right)g(k)x^kdx=h(s)g(-s).\tag{3.3}
\end{align*}
holds for all $0<\textnormal{Re}(s)<\delta$.
\end{theorem}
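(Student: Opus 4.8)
The plan is to run the skeleton of Hardy's proof of Theorem~\ref{thm:RMT} with $\pi/\sin(\pi z)$ replaced by the meromorphic function $h(z)$, checking at each step that the tailored growth condition \eqref{eq:growthsimple} together with the limit condition \eqref{eq:limit-property} compensate for the loss of the explicit form of the reciprocal sine. First I would verify that the series $\sum_{k=0}^\infty\textnormal{Res}_{z=-k}(h(z))\,g(k)\,x^k$ has a positive radius of convergence. Evaluating \eqref{eq:growthsimple} at $z=k$ gives $\lvert g(k)\rvert<Ce^{Pk}/\lvert\phi(k)\rvert$, so the $k$th coefficient obeys
\[
\lvert\textnormal{Res}_{z=-k}(h(z))\,g(k)\rvert^{1/k}<\big(Ce^{Pk}\big)^{1/k}\left\lvert\frac{\textnormal{Res}_{z=-k}(h(z))}{\phi(k)}\right\rvert^{1/k}.
\]
Letting $k\to\infty$ and invoking \eqref{eq:limit-property}, the root test yields a radius of convergence of at least $e^{-P}/L>0$, so the series defines a function on a disk about the origin.

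Next I would set up the same contour $C$ as Hardy --- the left semicircle of radius $n$ centered at a point $0<c<\delta$ --- applied to the integrand $h(z)g(-z)x^{-z}$, which is analytic on $0<\textnormal{Re}(z)<\delta$ since the poles of $h$ sit at the non-positive integers and $g$ is analytic on $H(\delta)$. The simple-pole hypothesis pays off in the residue computation: because $g(-z)$ and $x^{-z}$ are holomorphic at $z=-k$ with values $g(k)$ and $x^k$, the residue of the product factors cleanly as
\[
\textnormal{Res}_{z=-k}\big(h(z)g(-z)x^{-z}\big)=\textnormal{Res}_{z=-k}(h(z))\,g(k)\,x^k,
\]
which is exactly the general term of the series above and accounts for the clean form of the stated identity (3.3).

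The crux of the argument, and the step I expect to be the main obstacle, is showing that the contribution $I_\gamma$ from the arc $\gamma$ vanishes as $n\to\infty$. This is precisely what \eqref{eq:growthsimple} is engineered to do. From the definition of $\phi$ one has $\lvert h(z)\rvert=\lvert\phi(-z)\rvert/\lvert\sin(\pi z)\rvert$; for $z$ on the contour the point $-z$ lies in $H(\delta)$, and \eqref{eq:growthsimple} bounds $\lvert g(-z)\rvert$ by a constant multiple of $e^{P\,\textnormal{Re}(-z)+A\lvert\textnormal{Im}(z)\rvert}/\lvert\phi(-z)\rvert$. The factor $\lvert\phi(-z)\rvert$ therefore cancels, leaving $\lvert h(z)g(-z)\rvert$ bounded by the very quantity $Ce^{\,\cdots}/\lvert\sin(\pi z)\rvert$ that appears in Hardy's estimate (2.4). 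The arc integral is thus dominated termwise by the integral Hardy already controls, and since $A<\pi$ it tends to zero as $n\to\infty$. The real work here is purely bookkeeping: confirming the $\phi$ cancellation and that the combination of $e^{P\,\textnormal{Re}(-z)}$ with $\lvert x^{-z}\rvert$ is handled by the restriction to the disk of convergence exactly as in the original proof.

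Finally, with the arc term eliminated, the residue theorem identifies $\frac{1}{2\pi i}\int_{c-i\infty}^{c+i\infty}h(z)g(-z)x^{-z}\,dz$ with the series $\sum_{k=0}^\infty\textnormal{Res}_{z=-k}(h(z))\,g(k)\,x^k$, in exact analogy with \eqref{eq:residueintegral}. Because $h(z)g(-z)$ is analytic in the strip $0<\textnormal{Re}(z)<\delta$ and the same $\phi$-cancellation shows the integrand decays like $e^{(A-\pi)\lvert\textnormal{Im}(z)\rvert}$ along vertical lines, the integral converges absolutely and uniformly for $c$ in any compact subinterval of $(0,\delta)$. The Mellin inversion theorem then applies and returns $\int_0^\infty x^{s-1}\sum_{k=0}^\infty\textnormal{Res}_{z=-k}(h(z))\,g(k)\,x^k\,dx=h(s)g(-s)$ for all $0<\textnormal{Re}(s)<\delta$, which is the claim.
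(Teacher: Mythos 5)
Your proposal is correct and follows essentially the same route as the paper's own proof: root test via \eqref{eq:limit-property} for convergence on $|x|<e^{-P}/L$, the Hardy semicircle contour with the arc killed by the $\phi$-cancellation built into \eqref{eq:growthsimple}, the clean residue factorization from the simple-pole hypothesis, and Mellin inversion. If anything, your explicit identification of $\lvert h(z)\rvert=\lvert\phi(-z)\rvert/\lvert\sin(\pi z)\rvert$ makes the mechanism of the arc estimate clearer than the paper's bound (3.6), which performs the same cancellation implicitly.
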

\begin{proof}  Observe that 
\begin{align*}
    \left|\mymathop{Res}_{z=-k}(h(z))g(k)x^k\right|^{1/k}
    =\left|\mymathop{Res}_{z=-k}(h(z))\right|^{1/k}|g(k)|^{1/k}|x|\le\left|\dfrac{\mymathop{Res}\limits_{z=-k}(h(z))}{\phi(k)}\right|^{1/k}C^{1/k}e^P|x|,\tag{3.4}
\end{align*}
and taking the limit as $k\to\infty$ yields $Le^P|x|$. Thus, if $|x|<\frac{e^{-P}}{L}$, then the series converges by the root test. Let $0<c<\delta$ and consider the contour integral
\begin{equation*}
    \frac{1}{2\pi i}\int_Ch(z)g(-z)x^{-z}dz \tag{3.5}
\end{equation*}
 where $C$ is the left semicircle contour of radius $n\in\mathbb{N}$ centered at $z=c$.
Now consider the integral along the arc $\gamma$ of the semicircle. We have
\begin{align*} \bigg|\int_\gamma h(z)g(-z)x^{-z}\ dz\bigg|
&=\bigg|\int_{\pi/2}^{3\pi/2}h(c+ne^{i\theta})g(-c-ne^{i\theta})x^{-c-ne^{i\theta}}\ nie^{i\theta}d\theta\bigg|\\
&< \int_{\pi/2}^{3\pi/2}\frac{C ne^{An|\sin(\theta)|}L^{c+n\cos(\theta)}d\theta}{\sqrt{\sin^2(k(\theta))+\sinh^2(\pi n\sin(\theta))}}, \tag{3.6}
\end{align*}
where $k(\theta)=\pi c+\pi n\cos(\theta)$, and we have used the growth condition \eqref{eq:growthsimple} and the earlier derived bound $|x|<e^{-P}/L$. Using the fact that $\cos(\theta)$ is negative on the interval $(\pi/2,3\pi/2)$ and $L\ge1$ by assumption, and taking the limit as $n\to\infty$, we see that the arc term has zero contribution to the integral. Then by the residue theorem, we have
\begin{align*}\frac{1}{2\pi i}&\int_{c-i\infty}^{c+i\infty}x^{-s}g(-s)h(s)ds=\sum_{k=0}^\infty\mymathop{Res}_{z=-k}(h(z))g(k)x^{k}. \tag{3.7}
\end{align*}
Now, $x^{-s}g(-s)h(s)$ is analytic on $0<\mathrm{Re}(s)<\delta$ and the integral converges absolutely and uniformly for $c\in(a,b)$ and $0<a<b<\delta$. Then by the Mellin inversion theorem,
\begin{align*} \int_0^\infty x^{s-1}\sum_{k=0}^\infty\mymathop{Res}_{z=-k}(h(z))g(k)x^{k}dx=h(s)g(-s) \tag{3.8}
\end{align*}
for all $0<\mathrm{Re}(s)<\delta$.
\end{proof}

Note that setting $g=1$ produces an integral representation for meromorphic functions with simple poles so long as they satisfy the remaining hypotheses of the theorem:
\begin{align}\label{eq:meromorphic-rep}
    h(s)=\int_0^\infty x^{s-1}\sum_{k=0}^\infty\mymathop{Res}_{z=-k}(h(z))x^k\ dx. \tag{3.9}
\end{align}
In particular, we recover Bernoulli's integral representation of the Gamma function by setting $h(s)=\Gamma(s)$. Indeed, $\mymathop{Res}\limits_{z=-k}(\Gamma(z))=(-1)^k/k!$, so that
\begin{align}
    \Gamma(s)&=\int_0^\infty x^{s-1}\sum_{k=0}^\infty\frac{(-1)^k}{k!}x^k\ dx=\int_0^\infty x^{s-1}e^{-x}\ dx. \label{eq:bernoulli}\tag{3.10}
\end{align}

Interestingly, if Theorem~\ref{thm:meromorphic-formula} were only known to be true for the case $g=1$, the case of more general $g$ can also be derived using a heuristic method involving Taylor's theorem, which Ramanujan used in his quarterly reports. Starting with $h(s)$ as defined by \eqref{eq:meromorphic-rep}, replace $x$ with $y^nx$ so that we have
\begin{align*}
    y^{-ns}h(s)=\int_0^\infty x^{s-1}\sum_{k=0}^\infty\mymathop{Res}_{z=-k}(h(z))y^{nk}x^k\ dx. \tag{3.11}
\end{align*}
Defining $f$ by $g(z)=f(y^z)$, multiplying both sides by $\frac{f^{(n)}(0)}{n !}$, and summing over $0 \leq n<\infty$ produces
\begin{align*}
    \sum_{n=0}^\infty\frac{f^{(n)}(0) (y^{-s})^{n}}{n !}h(s)&=\sum_{n=0}^\infty\frac{f^{(n)}(0)}{n!}\int_0^\infty x^{s-1}\sum_{k=0}^\infty\mymathop{Res}_{z=-k}(h(z))y^{kn}x^k\ dx\\
    &=\int_0^\infty x^{s-1}\sum_{k=0}^\infty\mymathop{Res}_{z=-k}(h(z))\sum_{n=0}^\infty\frac{f^{(n)}(0) (y^k)^{n}}{n !}x^k\ dx \tag{3.12}
\end{align*}
From the definition of $f$, it follows that
\begin{align*} 
g(-s)h(s)=\int_0^\infty x^{s-1}\sum_{k=0}^\infty \mymathop{Res}_{z=-k}(h(z))g(k)x^kdx, \tag{3.13}
\end{align*}
thereby re-establishing the more general Theorem~\ref{thm:meromorphic-formula}. 

Just as in the case of Ramanujan's master theorem, Theorem~\ref{thm:meromorphic-formula} can be considered a sequence interpolation formula, and using Carlson's theorem \cite{bradshaw2023,hardy1920} we can show that this extension is unique.
\begin{proposition}\label{thm:unique-hardy-type} Let $g(n)$ be a function defined on the non-negative integers. Let $f_1(z)$ and $f_2(z)$ be analytic extensions of $g(n)$ satisfying \eqref{eq:growthsimple}. Then $f_1=f_2$.
\end{proposition}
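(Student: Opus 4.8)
The plan is to apply Carlson's theorem, but not to the difference $F = f_1 - f_2$ directly; instead I would apply it to the auxiliary function $\Psi(z) = \phi(z)F(z)$. The growth hypothesis \eqref{eq:growthsimple} is tailored precisely for this: since both $f_1$ and $f_2$ obey that bound, so does their difference, and multiplying through by $\phi$ clears the denominator to give $|\phi(v+iw)F(v+iw)| < 2Ce^{Pv+A|w|}$. This is exactly a classical Carlson-type estimate, with the critical feature $A < \pi$ carried over directly from the hypothesis.

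First I would record the elementary facts: $F$ is analytic on $H(\delta)$, hence on the closed right half-plane, and $F(n) = g(n) - g(n) = 0$ for every non-negative integer $n$. Next I would verify that $\Psi = \phi F$ is analytic on the closed right half-plane. The only candidate singularities of $\phi(z)$, the continuation of $\sin(\pi z)h(-z)$, lie at the non-negative integers, where $h(-z)$ inherits the simple poles of $h$; but these are exactly cancelled by the zeros of $\sin(\pi z)$, which is the whole point of passing to $\phi$. Thus $\Psi$ is analytic there, and since each $\phi(n)$ is finite while $F(n) = 0$, we retain $\Psi(n) = 0$ at every non-negative integer.

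I would then check the two growth requirements of Carlson's theorem for $\Psi$. In the half-plane, $|\Psi(v+iw)| < 2Ce^{Pv+A|w|} \leq 2Ce^{(|P|+A)|z|}$ shows that $\Psi$ is of exponential type; on the imaginary axis the same bound reads $|\Psi(iw)| < 2Ce^{A|w|}$, and $A < \pi$ places this strictly below the critical rate. Carlson's theorem then forces $\Psi \equiv 0$ throughout the right half-plane. Because $h$ is meromorphic and not identically zero and $\sin(\pi z)$ is not identically zero, $\phi \not\equiv 0$, so its zeros are isolated; on the complementary dense open set $\phi F = 0$ forces $F = 0$, and analyticity propagates this to all of $H(\delta)$, giving $f_1 = f_2$.

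The main obstacle I anticipate is the analyticity of $\Psi = \phi F$ on the closed right half-plane, namely confirming that the poles of $h(-z)$ are genuinely removed after multiplication by $\sin(\pi z)$ and passage to the continuation $\phi$, so that no stray singularities spoil the application of Carlson's theorem. A secondary but important point is to invoke Carlson's theorem in the precise form used here, requiring exponential type in the half-plane together with the sharp exponential bound $A < \pi$ on the imaginary axis; the apparently unhelpful factor $1/|\phi|$ in \eqref{eq:growthsimple} is handled correctly only by absorbing it back into $\phi$, which is what converts the awkward bound on $F$ into the clean Carlson estimate on $\Psi$.
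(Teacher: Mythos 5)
Your proposal follows essentially the same route as the paper: form the difference $F=f_1-f_2$, multiply by $\phi$ to turn the hypothesis \eqref{eq:growthsimple} into a clean exponential bound, and invoke Carlson's theorem. If anything, your write-up is more careful than the paper's: you correctly observe that Carlson's theorem yields $\phi F\equiv 0$ rather than $F\equiv 0$ directly (so one must still divide out the isolated zeros of $\phi$), and you verify the imaginary-axis growth rate $A<\pi$ separately rather than folding everything into a single bound of the form $Ce^{2\max\{P,A\}|z|}$, which on its own does not exhibit the sub-$\pi$ exponential rate that Carlson's theorem requires.
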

\begin{proof} Define $k(z):=f_1(z)-f_2(z)$. Then for all $n\in\mathbb{N}$, we have
$k(n)=f_1(n)-f_2(n)=g(n)-g(n)=0$. Furthermore, since $f_1$ and $f_2$ satisfy \eqref{eq:growthsimple}, $k$ also satisfies \eqref{eq:growthsimple}. Then there exists $C,P,A\in\mathbb{R}$ with $A<\pi$ such that 
\begin{align*}\lvert k(z)\rvert\cdot\lvert\phi(z)\rvert&\le Ce^{Px+A|y|}\\
&\le Ce^{P|z|+A|z|}\nonumber\\
&\le Ce^{2\max\{P,A\}|z|}\nonumber
\end{align*}
Thus, by Carlson's theorem, $k$ is identically zero. i.e. $f_1=f_2$.
\end{proof}

We now list several corollaries of Theorem~\ref{thm:meromorphic-formula}, some of which already appear in the literature. To start, let $h(z)=\psi(z)$ so that $\phi(z)=\sin(\pi z)\psi(-z)$.

\begin{corollary}\label{cor:digamma}
Let $g(z)$ be analytic on the half-plane $H(\delta)=\{z\in\mathbb{C}|\textnormal{Re}(z)\ge-\delta\}$ for some $0<\delta<1$. Assume $A<\pi$ and $g$ satisfies the growth condition
\begin{align*} \lvert g(v+iw)\rvert<\frac{Ce^{Pv+A|w|}}{\lvert\phi(v+iw)\rvert} \tag{3.14}
\end{align*}
for all $z=v+iw\in H(\delta)$. Then
\begin{align*} \int_0^\infty x^{s-1}\sum_{k=0}^\infty g(k)x^kdx=-\psi(s)g(-s) \tag{3.15}
\end{align*}
holds for all $0<\textnormal{Re}(s)<\delta$.
\end{corollary}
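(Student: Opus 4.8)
The plan is to recognize this corollary as the special case $h(z)=\psi(z)$ of Theorem~\ref{thm:meromorphic-formula}, so that the bulk of the work reduces to verifying that the digamma function meets the structural hypotheses of that theorem. The growth condition \eqref{eq:growthsimple} is already built into the statement with $\phi(z)=\sin(\pi z)\psi(-z)$, so nothing is required there; what remains is to confirm that $\psi$ is meromorphic with simple poles at the non-positive integers and that the limit \eqref{eq:limit-property} exists with $L\ge 1$.

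First I would recall the standard analytic structure of $\psi(z)=\Gamma'(z)/\Gamma(z)$: it is meromorphic with simple poles exactly at $z=-k$ for $k\ge 0$, and near each such pole it admits the expansion $\psi(z)\sim -1/(z+k)$, so that $\mymathop{Res}_{z=-k}(\psi(z))=-1$ for every $k$. This follows from the known Laurent expansion $\Gamma(z)\sim(-1)^k/(k!\,(z+k))$ together with $\psi=\Gamma'/\Gamma$. The uniform residue value immediately collapses the residue-weighted series in Theorem~\ref{thm:meromorphic-formula}: since every coefficient is $-1$, the sum $\sum_{k=0}^\infty\mymathop{Res}_{z=-k}(\psi(z))g(k)x^k$ equals $-\sum_{k=0}^\infty g(k)x^k$, which accounts for the sign appearing on the right-hand side of the corollary.

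The step that requires genuine care is the verification of \eqref{eq:limit-property}, because evaluating $\phi(k)=\sin(\pi z)\psi(-z)\big|_{z=k}$ at a non-negative integer is an indeterminate $0\cdot\infty$: the factor $\sin(\pi z)$ has a simple zero at $z=k$ while $\psi(-z)$ has a simple pole there. I would resolve this removable singularity by passing to local expansions, using $\sin(\pi z)\approx(-1)^k\pi(z-k)$ and $\psi(-z)\approx 1/(z-k)$ near $z=k$, which yields $\phi(k)=(-1)^k\pi$ and hence $|\phi(k)|=\pi$. Substituting the residue value and this evaluation gives $|\mymathop{Res}_{z=-k}(\psi(z))/\phi(k)|^{1/k}=\pi^{-1/k}\to 1$ as $k\to\infty$, so the limit exists with $L=1\ge 1$, exactly as required.

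With both hypotheses confirmed, the conclusion follows by direct appeal to Theorem~\ref{thm:meromorphic-formula}: inserting $\mymathop{Res}_{z=-k}(\psi(z))=-1$ into its conclusion and moving the overall sign to the other side produces $\int_0^\infty x^{s-1}\sum_{k=0}^\infty g(k)x^k\,dx=-\psi(s)g(-s)$ for all $0<\textnormal{Re}(s)<\delta$. The only real obstacle is the indeterminate evaluation of $\phi$ at the integers; once that removable singularity is handled correctly, the remainder is routine substitution into the master theorem.
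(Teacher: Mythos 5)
Your proposal is correct and follows the same route as the paper: specialize Theorem~\ref{thm:meromorphic-formula} to $h=\psi$, use $\mymathop{Res}_{z=-k}(\psi(z))=-1$ to get the signed power series, and check \eqref{eq:limit-property} with $L=1$. Your explicit resolution of the removable singularity of $\phi$ at the integers (giving $|\phi(k)|=\pi$) is the detail the paper leaves as ``a simple computation,'' and it is carried out correctly.
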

\begin{proof}
     Set $h(z)=\psi(z)$ in Theorem ~\ref{thm:meromorphic-formula}, where $\psi$ denotes the digamma function defined as the logarithmic derivative of the gamma function
\begin{align*}
    \psi(z)&:=\frac{d}{dz}\ln(\Gamma(z))=\frac{\Gamma'(z)}{\Gamma(z)}. \tag{3.16}
\end{align*}
Note that the digamma function has simple poles at the non-positive integers. A simple residue computation reveals that
\begin{align*}
    \mymathop{Res}_{z=-k}(\psi(z)g(-z)x^{-z})=-g(k)x^k, \tag{3.17}
\end{align*}
so that the resulting power series appearing in the integrand of the theorem will be $-\sum_{k=0}^\infty g(k)x^k$. Moreover, a simple computation shows that this choice of function satisfies property \eqref{eq:limit-property} with $L=1$.
\end{proof}

Corollary~\ref{cor:digamma} appears to be more of an interesting artifact of Theorem~\ref{thm:meromorphic-formula} than a useful tool as there don't appear to be many functions which satisfy its hypothesis, if any. Not even the choice $g=1$ suffices here. Let $h(z)=\Gamma(s)\cos(\pi s/2)$. The following is a more useful identity which appears in \cite{2}, and the reader can find an operational justification for it in \cite{5}. 

\begin{corollary}\label{cor:cosine}
Let $g(z)$ be analytic on the half-plane $H(\delta)=\{z\in\mathbb{C}|\textnormal{Re}(z)\ge-\delta\}$ for some $0<\delta<1$. Assume $A<\pi$ and $g$ satisfies the growth condition
\begin{align*} \lvert g(v+iw)\rvert<\frac{Ce^{Pv+A|w|}}{\lvert\phi(v+iw)\rvert}  \tag{3.18}
\end{align*}
for all $z=v+iw\in H(\delta)$. Then
\begin{align*} \int_0^\infty x^{s-1}&\sum_{k=0}^\infty\frac{(-1)^k}{\Gamma(2k+1)}g(2k)x^{2k}dx=\Gamma(s)g(-s)\cos\left(\frac{\pi s}{2}\right) \tag{3.19} 
\end{align*}
holds for all $0<\textnormal{Re}(s)<\delta$.
\end{corollary}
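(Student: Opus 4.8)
The plan is to apply Theorem~\ref{thm:meromorphic-formula} directly with $h(z)=\Gamma(z)\cos(\pi z/2)$, so the bulk of the work is (i) identifying which non-positive integers are genuine simple poles, (ii) computing the residues there, and (iii) verifying the limit condition \eqref{eq:limit-property}. First I would record the pole structure: $\Gamma(z)$ has simple poles at every non-positive integer, while $\cos(\pi z/2)$ has simple zeros exactly at the odd integers. Hence at the odd negative integers the zero of the cosine cancels the pole of the gamma function, leaving a removable singularity, and the only surviving poles of $h$ are the simple poles at the even non-positive integers $z=-2k$. In particular $h$ is meromorphic with at worst simple poles at the non-positive integers, so the structural hypothesis of the main theorem is met.

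Next I would compute the residues. Using $\mymathop{Res}_{z=-n}\Gamma(z)=(-1)^n/n!$ together with $\cos(-\pi k)=(-1)^k$, I obtain $\mymathop{Res}_{z=-2k}(h(z))=(-1)^k/(2k)!=(-1)^k/\Gamma(2k+1)$, while the residue vanishes at the odd negative integers. Consequently the series in the conclusion of Theorem~\ref{thm:meromorphic-formula} retains only its even-indexed terms and collapses to $\sum_{k=0}^\infty\frac{(-1)^k}{\Gamma(2k+1)}g(2k)x^{2k}$, which is precisely the integrand claimed in the corollary; the right-hand side is $h(s)g(-s)=\Gamma(s)\cos(\pi s/2)g(-s)$, as desired. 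To discharge \eqref{eq:limit-property}, I would first simplify $\phi$. Since $\phi(z)$ is the continuation of $\sin(\pi z)h(-z)=\sin(\pi z)\Gamma(-z)\cos(\pi z/2)$ and the reflection formula gives $\sin(\pi z)\Gamma(-z)=-\pi/\Gamma(1+z)$, one finds the entire function $\phi(z)=-\pi\cos(\pi z/2)/\Gamma(1+z)$. Evaluating at the even integers yields $\phi(2k)=-\pi(-1)^k/(2k)!$, so that $\lvert\mymathop{Res}_{z=-2k}(h)/\phi(2k)\rvert=1/\pi$ and the relevant limit is $\lim_{k}(1/\pi)^{1/(2k)}=1$, giving $L=1\ge1$.

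The step requiring the most care is the bookkeeping at the odd integers. Because both $\mymathop{Res}_{z=-k}(h)$ and $\phi(k)$ vanish there, the quotient in \eqref{eq:limit-property} is formally $0/0$ along the odd subsequence, and the growth bound \eqref{eq:growthsimple} places no constraint on $g$ at odd integers, the denominator $\lvert\phi\rvert$ being zero. I would resolve this by observing that the odd-indexed terms are absent from the series entirely, so the root-test argument underlying the main theorem only sees the even subsequence; evaluating \eqref{eq:limit-property} along that subsequence is both well defined and sufficient, and it returns $L=1$. Once this point is made explicit, the hypotheses of Theorem~\ref{thm:meromorphic-formula} are in force and the stated identity follows on $0<\textnormal{Re}(s)<\delta$.
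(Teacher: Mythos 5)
Your proposal is correct and follows essentially the same route as the paper: apply Theorem~\ref{thm:meromorphic-formula} with $h(z)=\Gamma(z)\cos(\pi z/2)$, observe that the odd-integer poles of $\Gamma$ are cancelled by the zeros of the cosine so only the even non-positive integers contribute residues $(-1)^k/\Gamma(2k+1)$, and check \eqref{eq:limit-property} with $L=1$. In fact your treatment is slightly more careful than the paper's, which dismisses the verification of \eqref{eq:limit-property} as ``a straightforward computation'': you explicitly compute $\phi(z)=-\pi\cos(\pi z/2)/\Gamma(1+z)$ and address the $0/0$ degeneracy of the quotient in \eqref{eq:limit-property} along the odd integers, a point the paper leaves implicit.
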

\begin{proof}
    The function $x^{-s}g(-s)\cos(\frac{\pi s}{2})\Gamma(s)$ has simple poles at the non-positive even integers (the odd integer poles are canceled by the zero of the cosine function) and the residues are given by
\begin{align*}
    \mymathop{Res}_{z=-2k}&\left(x^{-s}g(-s)\cos\left(\frac{\pi s}{2}\right)\Gamma(s)\right)=(-1)^k\frac{g(2k)}{\Gamma(2k+1)}x^{2k}. \tag{3.20}
\end{align*}
A straightforward computation shows that the condition \eqref{eq:limit-property} on $L$ is satisfied in this case with $L=1$.
\end{proof}

Notice that setting $g(z)=a^z$ gives us the Mellin transform of the scaled cosine function
\begin{align*}
    \int_0^\infty x^{s-1}\cos(ax)\ dx=a^{-s}\Gamma(s)\cos\left(\frac{\pi s}{2}\right). \tag{3.21}
\end{align*}

The bound on $g$ in Theorem~\ref{thm:meromorphic-formula} can likely be improved upon. To see this, note that an equivalent version of Ramanujan's master theorem can be derived by applying Euler's reflection formula to $\pi/\sin(\pi s)$ and redefining $g$. Indeed, by sending $g(s)\to g(s)/\Gamma(1+s)$, we have
\begin{align}\label{eq:rmt-gamma}
    \int_0^\infty x^{s-1}\sum_{k=0}^\infty\frac{(-1)^k}{k!}g(k)x^k dx=\Gamma(s)g(-s). \tag{3.22}
\end{align}
However, we can derive this result directly using a modification of Hardy's proof as before by setting $h(z)=\Gamma(z)$.

\begin{corollary}
Let $g(z)$ be analytic on the half-plane $H(\delta)=\{z\in\mathbb{C}|\textnormal{Re}(z)\ge-\delta\}$ for some $0<\delta<1$. Assume $A<\pi$ and $g$ satisfies the growth condition
\begin{align} \lvert g(v+iw)\rvert<\frac{Ce^{Pv+A|w|}}{\lvert\phi(v+iw)\rvert} \label{eq:growthgamma}\tag{3.23}
\end{align}
for all $z=v+iw\in H(\delta)$. Then
\begin{align*} \int_0^\infty x^{s-1}\sum_{k=0}^\infty\frac{(-1)^k}{k!}g(k)x^kdx=\Gamma(s)g(-s) \tag{3.24}
\end{align*}
holds for all $0<\textnormal{Re}(s)<\delta$.
\end{corollary}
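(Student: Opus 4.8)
The plan is to apply Theorem~\ref{thm:meromorphic-formula} directly with the choice $h(z)=\Gamma(z)$. The gamma function is meromorphic with simple poles precisely at the non-positive integers, so the structural hypothesis on $h$ is satisfied immediately. As already recorded in the derivation of \eqref{eq:bernoulli}, the residue at each pole is $\mymathop{Res}_{z=-k}(\Gamma(z))=(-1)^k/k!$, so the power series appearing in the integrand of the Main Theorem becomes $\sum_{k=0}^\infty \frac{(-1)^k}{k!}g(k)x^k$, which is exactly the series in the statement.

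The first genuine computation is to identify the function $\phi(z)$, defined as the analytic continuation of $\sin(\pi z)h(-z)=\sin(\pi z)\Gamma(-z)$. I would invoke Euler's reflection formula in the form $\Gamma(-z)\Gamma(1+z)=-\pi/\sin(\pi z)$ to collapse this to $\phi(z)=-\pi/\Gamma(z+1)$. With this explicit form in hand, the growth condition \eqref{eq:growthgamma} is literally the condition \eqref{eq:growthsimple} of the Main Theorem specialized to $h=\Gamma$, so no separate verification is needed there; the reciprocal-$\phi$ weighting on $g$ amounts to multiplying the bare exponential bound by $\lvert\Gamma(z+1)\rvert/\pi$.

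The remaining hypothesis to check is the limit condition \eqref{eq:limit-property}. Using the residue $(-1)^k/k!$ together with the value $\phi(k)=-\pi/\Gamma(k+1)=-\pi/k!$, the ratio inside the limit is $\lvert (1/k!)/(\pi/k!)\rvert=1/\pi$ for every $k$, so its $k$-th root tends to $1$ and we obtain $L=1\ge1$. All hypotheses of Theorem~\ref{thm:meromorphic-formula} are thus met, and the conclusion $\int_0^\infty x^{s-1}\sum_{k=0}^\infty \frac{(-1)^k}{k!}g(k)x^k\,dx=\Gamma(s)g(-s)$ follows on $0<\textnormal{Re}(s)<\delta$.

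I expect the only real subtlety to be bookkeeping with the reflection formula — in particular, keeping the sign and the argument shift straight so that one lands on $\phi(z)=-\pi/\Gamma(z+1)$ rather than an off-by-one variant — since every downstream check (the match with \eqref{eq:growthsimple} and the value of $L$) rests on this identity. As a consistency check I would confirm that the result coincides with \eqref{eq:rmt-gamma}, which the same reflection-formula manipulation produces when applied directly to Ramanujan's master theorem, so the two routes to this corollary agree.
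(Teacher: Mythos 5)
Your proof is correct and takes the same route as the paper: specialize Theorem~\ref{thm:meromorphic-formula} to $h(z)=\Gamma(z)$ and use $\mymathop{Res}_{z=-k}(\Gamma(z))=(-1)^k/k!$ to identify the series in the integrand. You are in fact more careful than the paper's own proof, which omits both the explicit identification $\phi(z)=\sin(\pi z)\Gamma(-z)=-\pi/\Gamma(z+1)$ via the reflection formula and the verification that \eqref{eq:limit-property} holds with $L=1$; your computations of both are correct.
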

\begin{proof}
    Let $h(z)=\Gamma(z)$ and apply Theorem~\ref{thm:meromorphic-formula}. The residue calculation produces
\begin{align*}
    \mymathop{Res}_{z=-k}(\Gamma(z)g(-z)x^{-z})&=g(k)x^k\mymathop{Res}_{z=-k}(\Gamma(z))\\
    &=\frac{(-1)^k}{k!}g(k)x^k, \tag{3.25}
\end{align*}
where in the first line we have used the analyticity of $g(-z)x^{-z}$ combined with the fact that the gamma function has simple poles at the non-positive integers. The theorem now follows from the Mellin inversion theorem, just as before.
\end{proof}

\begin{center}
    {\large \textsc{\textbf{\S IV. Integral representation of meromorphic function}}}
\end{center}
The aim of this section is to study independently the integral representation of meromorphic functions with simple poles at the non-positive integers that we obtained in \eqref{eq:meromorphic-rep}:
\begin{align}\label{eq:mero-rep}
    h(s)=\int_0^\infty x^{s-1}\sum_{k=0}^\infty\mymathop{Res}_{z=-k}(h(z))x^k\ dx. \tag{4.1}
\end{align}
In particular, we will show that $h(s)$ is logarithmically convex and the function $h_m(x)=h(x+m)/h(m)$ is supermultiplicative. 

Let $f,g,w:I\subseteq \mathbb{R}\to\mathbb{R}$ such that $w(x)\geq 0, \forall x\in I$ and $w, wfg, wf, wg$ are integrable on $I$. We say $f$ and $g$ are \textit{synchronous} if $(f(x)-f(y))(g(x)-g(y))\geq 0, \forall x,y\in I$ and \textit{asynchronous} if $(f(x)-f(y))(g(x)-g(y))\leq 0, \forall x,y\in I$. If $f$ and $g$ are synchronous (asynchronous) on $I$ then \cite{27}
\begin{equation}\label{eq:chebyshev}
    \int_{I}w(x)dx\int_{I}w(x)f(x)g(x)\geq(\leq)\int_{I}w(x)f(x)dx\int_{I}w(x)g(x)dx. \tag{4.2}
\end{equation}

For a given $m\geq 0$, consider $h_m:[0,\infty)\to\mathbb{R}$,
\begin{equation*}
    h_{m}(x)=\frac{h(x+m)}{h(m)}. \tag{4.3}
\end{equation*}
where $h(s)$ is defined as in \eqref{eq:mero-rep}.
\begin{theorem}
   The mapping $h_{m}(.)$ is supermultiplicative on $[0,\infty)$. 
\end{theorem}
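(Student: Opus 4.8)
The plan is to read the integral representation \eqref{eq:mero-rep} as a weighted moment and to extract supermultiplicativity directly from the Chebyshev integral inequality \eqref{eq:chebyshev}. First I would unwind the definition: the assertion that $h_m$ is supermultiplicative on $[0,\infty)$ means $h_m(x+y)\ge h_m(x)h_m(y)$ for all $x,y\ge 0$, and after clearing the normalizing constant $h(m)$ this is equivalent to the single product inequality
\[
h(x+y+m)\,h(m)\ \ge\ h(x+m)\,h(y+m).
\]
So it suffices to establish this inequality for the representation \eqref{eq:mero-rep}.

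Next, write $\Phi(t)=\sum_{k=0}^\infty\mymathop{Res}_{z=-k}(h(z))\,t^k$ for the series appearing in \eqref{eq:mero-rep}, so that $h(s)=\int_0^\infty t^{s-1}\Phi(t)\,dt$. I would then apply \eqref{eq:chebyshev} on $I=[0,\infty)$ with weight $w(t)=t^{m-1}\Phi(t)$ and the two factors $f(t)=t^x$ and $g(t)=t^y$. With these choices the four integrals in \eqref{eq:chebyshev} become exactly $\int_I w\,dt=h(m)$, $\int_I wfg\,dt=h(x+y+m)$, $\int_I wf\,dt=h(x+m)$, and $\int_I wg\,dt=h(y+m)$, so the synchronous case of \eqref{eq:chebyshev} reproduces precisely the desired product inequality. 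The synchronicity hypothesis is immediate: for $x,y\ge 0$ both $t\mapsto t^x$ and $t\mapsto t^y$ are nondecreasing on $[0,\infty)$, whence $(t_1^x-t_2^x)(t_1^y-t_2^y)\ge 0$ for all $t_1,t_2\in I$.

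The step demanding the most care — and the genuine hypothesis concealed by the statement — is the nonnegativity and integrability of the weight. The inequality \eqref{eq:chebyshev} is only available once $w(t)=t^{m-1}\Phi(t)\ge 0$, so one needs the density $\Phi$ to be nonnegative on $[0,\infty)$; this holds in the motivating example $h=\Gamma$, where $\Phi(t)=e^{-t}$ as in \eqref{eq:bernoulli}, even though the individual residues alternate in sign. I would therefore impose (or verify for the $h$ at hand) the standing condition $\Phi\ge 0$, which simultaneously gives $h(m)=\int_0^\infty t^{m-1}\Phi(t)\,dt>0$ so that the normalization defining $h_m$ is legitimate. One must also confirm that $w$, $wf$, $wg$, and $wfg$ are integrable on $[0,\infty)$; this follows from the convergence of \eqref{eq:mero-rep} at the shifted arguments $m,\ x+m,\ y+m,\ x+y+m$ ensured by Theorem~\ref{thm:meromorphic-formula}. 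Once these regularity conditions are in place, Chebyshev's inequality closes the argument in one line, and the same weighted-moment device (with $x=y$, or via Cauchy--Schwarz) yields the logarithmic convexity of $h$ as a companion result.
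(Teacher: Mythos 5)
Your proposal is correct and follows essentially the same route as the paper: both reduce supermultiplicativity to the product inequality $h(m)h(x+y+m)\ge h(x+m)h(y+m)$ and obtain it from Chebyshev's inequality \eqref{eq:chebyshev} with weight $w(t)=t^{m-1}\sum_k \mymathop{Res}_{z=-k}(h(z))t^k$ and the synchronous pair $f(t)=t^x$, $g(t)=t^y$. Your explicit flagging of the nonnegativity of the weight as a genuine standing hypothesis matches the paper's own ``suppose that $w(t)\ge 0$'' step, and is if anything stated more carefully.
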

\begin{proof}
    Let $f(t)=t^x$ and $g(t)=t^y$ for some $x,y>0$. Then both $f(t)$ and $g(t)$ are non decreasing on $[0,\infty)$ and are therefore synchronous. Suppose that
    \begin{equation*}
        w(t):=t^{m-1}\sum_{k=0}^{\infty}\mymathop{Res}_{z=-k}(h(z))t^k\geq 0 \tag{4.4}
    \end{equation*}
on $[0,\infty)$. Then applying Chebyshev's inequality \eqref{eq:chebyshev} for synchronous mappings produces
\begin{align*}
    \int_{0}^{\infty}t^{m-1}&\sum_{k=0}^{\infty}\mymathop{Res}_{z=-k}(h(z))t^kdt\int_{0}^{\infty}t^{x+y+m-1}\sum_{k=0}^{\infty}\mymathop{Res}_{z=-k}(h(z))t^kdt \\& \geq \int_{0}^{\infty}t^{x+m-1}\sum_{k=0}^{\infty}\mymathop{Res}_{z=-k}(h(z))t^kdt\int_{0}^{\infty}t^{y+m-1}\sum_{k=0}^{\infty}\mymathop{Res}_{z=-k}(h(z))t^kdt, \tag{4.5}
\end{align*}
which is equivalent to
\begin{equation*}
    h(m)h(x+y+m)\geq h(x+m)h(y+m). \tag{4.6}
\end{equation*}
It therefore follows from the definition of $h_m$ that
\begin{equation*}
    h_{m}(x+y)\geq h_{m}(x)h_{m}(y), \tag{4.7}
\end{equation*}
and this completes the proof.
\end{proof}

Let $I\subset\mathbb{R}$ be an interval in $\mathbb{R}$ and assume that $f\in L_{p}(I)$ and $g\in L_{q}(I)$, i.e.,
\begin{equation*}
    \int_{I}|f(s)|^pds, \int_{I}|f(s)|^qds<\infty, \,\,\left(q,p>1\right). \tag{4.8}
\end{equation*}
If $\frac{1}{p}+\frac{1}{q}=1$, then $fg\in L_{1}(I)$ and H\"older's inequality yields
\begin{equation*}
 \left|\int_{I}f(s)g(s)ds\right|\leq \left(\int_{I}|f(s)|^p ds\right)^\frac{1}{p} \left(\int_{I}|g(s)|^q ds\right)^\frac{1}{q}. \tag{4.9}
\end{equation*}
In fact, for some non-negative function $w(s)$ on $I$, the following analogue of H\"older's inequality holds:
\begin{equation}\label{eq:holder-analog}
    \left|\int_{I}f(s)g(s)w(s)ds\right|\leq \left(\int_{I}|f(s)|^p w(s) ds\right)^\frac{1}{p} \left(\int_{I}|g(s)|^q w(s) ds\right)^\frac{1}{q} \tag{4.10}
\end{equation}
provided that the integrals exist and are finite.

\begin{theorem}
    Let $a,b>0$ with $a+b=1$ and $x,y\geq 0$. Then $h(s)$ is logarithmically convex.
\end{theorem}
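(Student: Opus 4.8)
The plan is to recognize that logarithmic convexity of $h$ is precisely the assertion that, for $a,b>0$ with $a+b=1$ and $x,y\ge 0$,
\[
h(ax+by)\le h(x)^a\,h(y)^b,
\]
and to extract this inequality directly from the weighted Hölder inequality \eqref{eq:holder-analog} with conjugate exponents $p=1/a$ and $q=1/b$. Note that $a+b=1$ guarantees both $\tfrac{1}{p}+\tfrac{1}{q}=a+b=1$ and $a,b\in(0,1)$, so this choice of exponents is admissible.

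First I would rewrite the integral representation \eqref{eq:mero-rep} in the compact form $h(s)=\int_0^\infty t^{s-1}w(t)\,dt$ with weight
\[
w(t)=\sum_{k=0}^\infty\mymathop{Res}_{z=-k}(h(z))\,t^k,
\]
which I take to be nonnegative on $[0,\infty)$ exactly as in the preceding supermultiplicativity theorem. The key algebraic observation is that, since $a+b=1$, the exponent factors as
\[
t^{ax+by-1}=t^{a(x-1)+b(y-1)}=\bigl(t^{x-1}\bigr)^a\bigl(t^{y-1}\bigr)^b .
\]
Setting $f(t)=t^{a(x-1)}$ and $g(t)=t^{b(y-1)}$ then gives $f(t)g(t)=t^{ax+by-1}$, so that $h(ax+by)=\int_0^\infty f(t)g(t)\,w(t)\,dt$, which is exactly the left-hand side of \eqref{eq:holder-analog}.

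Next I would apply \eqref{eq:holder-analog} to this $f$, $g$, $w$ with $p=1/a$ and $q=1/b$. Because $f(t)^{1/a}=t^{x-1}$ and $g(t)^{1/b}=t^{y-1}$, the two factors on the right-hand side collapse to
\[
\Bigl(\int_0^\infty t^{x-1}w(t)\,dt\Bigr)^{a}\Bigl(\int_0^\infty t^{y-1}w(t)\,dt\Bigr)^{b}=h(x)^a\,h(y)^b,
\]
yielding $h(ax+by)\le h(x)^a h(y)^b$ and hence the logarithmic convexity of $h$.

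The main point requiring care is the applicability of Hölder's inequality rather than any deep computation: one must ensure that $w\ge 0$ on $[0,\infty)$, so that it is a legitimate weight, and that each of $h(x)$, $h(y)$, and $h(ax+by)$ converges, so that all quantities in \eqref{eq:holder-analog} exist and are finite. The nonnegativity of $w$ is the same hypothesis already invoked for supermultiplicativity, and the convergence is guaranteed in the region where the representation \eqref{eq:mero-rep} is valid; granting these, the argument is otherwise a routine specialization of the weighted Hölder inequality.
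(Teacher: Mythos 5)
Your argument is correct and is essentially the paper's own proof: the same choices $f(t)=t^{a(x-1)}$, $g(t)=t^{b(y-1)}$, the same nonnegative weight $w$, and the same exponents $p=1/a$, $q=1/b$ in the weighted H\"older inequality \eqref{eq:holder-analog}, leading to $h(ax+by)\le h(x)^a h(y)^b$. Your explicit remarks on the admissibility of the exponents and the nonnegativity and integrability hypotheses are a welcome clarification but do not change the route.
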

\begin{proof}
    Choose $f(s)=s^{a(x-1)}$, $g(s)=s^{b(y-1)}$ and 
    \begin{equation*}
         w(s):=\sum_{k=0}^{\infty}\mymathop{Res}_{z=-k}(h(z))s^k\geq 0 \tag{4.11}
    \end{equation*}
     for $s\in(0,\infty)$ in \eqref{eq:holder-analog} with $I=(0,\infty)$ and $p=1/a, q=1/b$. Then
    \begin{align*}
        \int_{0}^{\infty}&s^{a(x-1)}s^{b(y-1)}\sum_{k=0}^{\infty}\mymathop{Res}_{z=-k}(h(z))s^k ds\\&\leq \left(\int_{0}^{\infty}s^{pa(x-1)}\sum_{k=0}^{\infty}\mymathop{Res}_{z=-k}(h(z))s^k ds\right)^\frac{1}{p}\left(\int_{0}^{\infty}s^{qb(y-1)}\sum_{k=0}^{\infty}\mymathop{Res}_{z=-k}(h(z))s^k ds\right)^\frac{1}{q}, \tag{4.12}
    \end{align*}
    from which it follows that
\begin{align*}
        \int_{0}^{\infty}s^{ax+by-1}&\sum_{k=0}^{\infty}\mymathop{Res}_{z=-k}(h(z))s^k ds \\&\leq \left(\int_{0}^{\infty}s^{x-1}\sum_{k=0}^{\infty}\mymathop{Res}_{z=-k}(h(z))s^k ds\right)^a \left(\int_{0}^{\infty}s^{y-1}\sum_{k=0}^{\infty}\mymathop{Res}_{z=-k}(h(z))s^k ds \right)^b . \tag{4.13}
    \end{align*}
Thus, 
\begin{equation*}
    h(ax+by)\leq [h(x)]^a [h(y)]^b. \tag{4.14}
\end{equation*}
    This completes the proof. 
\end{proof}
    When $h(s)=\Gamma(s)$, we recover the above results for the gamma function.

\begin{center}
    \textsc{\textbf{\large \S. V Higher order poles}}
\end{center}
A consequence of the introduction of meromorphic functions with higher order poles is a loss of the power series form for the function undergoing a Mellin transformation. Instead, we see that factors of $\log(x)$ make an appearance in the summand, making the identities that follow rather exotic, and for this reason we forgo a study of the convergence conditions, giving only heuristic results in this section. 

\begin{theorem}\label{thm:higher-order}
Let $g(z)$ be analytic on the half-plane $H(\delta)=\{z\in\mathbb{C}|\textnormal{Re}(z)\ge-\delta\}$ for some $0<\delta<1$, and let $h(z)$ be meromorphic with poles of order $N_k$ at the non-positive integers $k=0,-1,-2,\ldots$. Then under suitable growth conditions,
\begin{align*} \int_0^\infty x^{s-1}\sum_{k=0}^\infty \sum_{n=0}^{N_k-1}\frac{(-1)^{N_k-n-1}}{\Gamma(N_k-n)}c^{(k)}_{n-N_k}\left[\left(\frac{d}{dz}+\log(x)\right)^{N_k-n-1}g(z)\right]_{z=k}x^kdx=h(s)g(-s)\tag{5.1}
\end{align*}
holds for all $0<\textnormal{Re}(s)<\delta$, where $c^{(k)}_{-1},\ldots,c^{(k)}_{-N_k}$ are the coefficients of the principal part of $h$ around $z=-k$.
\end{theorem}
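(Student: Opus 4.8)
The plan is to follow the same three-stage architecture used in the proofs of Theorem~\ref{thm:RMT} and Theorem~\ref{thm:meromorphic-formula}: first introduce the contour integral $\frac{1}{2\pi i}\int_C h(z)g(-z)x^{-z}\,dz$ over the left semicircular contour $C$ of radius $n$ centered at some $0<c<\delta$; second, argue that the arc contribution vanishes as $n\to\infty$; and third, collect the residues at the poles $z=-k$ and invoke the Mellin inversion theorem. Since the statement is explicitly heuristic (``under suitable growth conditions''), I would carry the contour setup and the arc-decay estimate over essentially verbatim from the earlier proofs, with the sole understanding that the growth hypothesis on $g$ must now dominate the full sum of principal parts of $h$ rather than a single residue. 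The genuinely new content lies entirely in the residue computation at a pole of order $N_k>1$, so that is where I would concentrate.

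For the residue, fix $k$ and write the Laurent expansion of $h$ about $z=-k$ as $h(z)=\sum_{m\ge-N_k}c^{(k)}_m(z+k)^m$, so that $F(z):=(z+k)^{N_k}h(z)=\sum_{j\ge0}c^{(k)}_{j-N_k}(z+k)^j$ is analytic at $z=-k$ with $F^{(j)}(-k)=j!\,c^{(k)}_{j-N_k}$; note that for $0\le j\le N_k-1$ these are exactly the principal-part coefficients named in the statement. Then
\begin{align*}
\mymathop{Res}_{z=-k}\big(h(z)g(-z)x^{-z}\big)=\frac{1}{(N_k-1)!}\,\frac{d^{N_k-1}}{dz^{N_k-1}}\Big[F(z)\,g(-z)x^{-z}\Big]_{z=-k},
\end{align*}
and I would expand this derivative with the Leibniz rule into a sum over $j$ of $\binom{N_k-1}{j}F^{(j)}(-k)$ times derivatives of $g(-z)x^{-z}$.

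The key device for handling the factor $g(-z)x^{-z}$ is the conjugation identity
\begin{align*}
\frac{d^{m}}{dw^{m}}\big[x^{w}g(w)\big]=x^{w}\Big(\frac{d}{dw}+\log(x)\Big)^{m}g(w),
\end{align*}
which follows by an easy induction from $\frac{d}{dw}\big(x^{w}g\big)=x^{w}\big(g'+\log(x)g\big)$. Writing $g(-z)x^{-z}=\tilde G(-z)$ with $\tilde G(w)=x^{w}g(w)$ and using the chain rule to produce a sign $(-1)^{r}$ on the $r$-th derivative, this yields
\begin{align*}
\frac{d^{r}}{dz^{r}}\big[g(-z)x^{-z}\big]\Big|_{z=-k}=(-1)^{r}x^{k}\Big[\big(\tfrac{d}{dz}+\log(x)\big)^{r}g(z)\Big]_{z=k}.
\end{align*}
Substituting $F^{(j)}(-k)=j!\,c^{(k)}_{j-N_k}$ together with this expression at $r=N_k-1-j$ into the Leibniz sum, the factors $(N_k-1)!$, $j!$, and the binomial coefficient collapse to leave $1/(N_k-1-j)!=1/\Gamma(N_k-j)$; relabeling $n=j$ then reproduces the summand of the theorem exactly. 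Summing over $k$ produces the power-series-with-logarithms in the integrand, and the Mellin inversion theorem delivers the identity on $0<\mathrm{Re}(s)<\delta$.

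The main obstacle is purely the bookkeeping in this residue calculation: tracking the index shift between the Laurent coefficients $c^{(k)}_m$ and the Taylor coefficients of $F$, the alternating signs generated by the chain rule, and the cancellation of the factorials. The conjugation identity is what makes the operator $\big(\frac{d}{dz}+\log(x)\big)^{N_k-n-1}$ emerge in closed form; without it, the iterated product-rule derivatives of $g(-z)x^{-z}$ would be intractable to organize. All the analytic content — absolute convergence of the resulting series and the decay of the arc term — is deliberately subsumed into the unspecified ``suitable growth conditions,'' consistent with the heuristic scope of this section.
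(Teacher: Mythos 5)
Your proof is correct and follows the paper's overall architecture exactly --- contour integral, arc term deferred to the unspecified ``suitable growth conditions'' (as the paper itself does), residue sum, Mellin inversion --- so the only place the two arguments genuinely diverge is in how the residue at the order-$N_k$ pole is computed. The paper expands all three factors as series about $z=-k$, forms the Cauchy product of the Taylor series of $g(-z)$ and $x^{-z}$ (which is where the operator $\left(\frac{d}{dz}+\log(x)\right)^{j}$ first appears, in the form $x^k\sum_{j}\frac{(-1)^j}{j!}\left[\left(\frac{d}{dz}+\log(x)\right)^{j}g(z)\right]_{z=k}(z+k)^j$), and then reads off the coefficient of $(z+k)^{-1}$ in the product with the Laurent series of $h$. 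You instead use the derivative formula $\mymathop{Res}_{z=-k}=\frac{1}{(N_k-1)!}\frac{d^{N_k-1}}{dz^{N_k-1}}\left[(z+k)^{N_k}h(z)g(-z)x^{-z}\right]_{z=-k}$ together with the Leibniz rule and the conjugation identity $\frac{d^{m}}{dw^{m}}\left(x^{w}g(w)\right)=x^{w}\left(\frac{d}{dw}+\log(x)\right)^{m}g(w)$. The two are equivalent --- your conjugation identity is precisely the closed form of the paper's Cauchy product --- and your bookkeeping (the identification $F^{(j)}(-k)=j!\,c^{(k)}_{j-N_k}$, the sign $(-1)^{N_k-1-j}$ from the chain rule, and the collapse of $(N_k-1)!$, $j!$, and the binomial coefficient to $1/\Gamma(N_k-n)$) checks out and reproduces the stated summand exactly. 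The derivative-formula route is arguably easier to verify term by term; the paper's series route makes it slightly more transparent that only the finitely many principal-part coefficients of $h$ survive in the residue. Neither version supplies the analytic hypotheses needed to make the arc estimate and the interchange of sum and integral rigorous, but the theorem is explicitly framed as heuristic on that point, so this is not a gap relative to the paper.
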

\begin{proof}
    \sloppy The assumption of suitable growth conditions is simply to assure convergence where necessary and to force the arc term in the inverse Mellin transform to vanish in the limit. The remainder of the theorem is proven by examining the Laurent expansion of $h(z)g(-z)x^{-z}$ about the non-positive integers. By assumption, $h$ admits a Laurent series expansion about $z=-k$ of the form
    \begin{equation*}
        h(z)=\sum_{j=-N_k}^\infty c^{(k)}_j(z+k)^j. \tag{5.2}
    \end{equation*}
    Similarly, $g$ admits a Taylor series
    \begin{equation*}
        g(-z)=\sum_{j=0}^\infty(-1)^j\frac{g^{(j)}(k)}{j!}(z+k)^j,\tag{5.3}
    \end{equation*}
    and we may expand $x^{-z}$ as
    \begin{equation*}
        x^{-z}=\sum_{j=0}^\infty (-1)^j\frac{\log^j(x)}{j!}x^k(z+k)^j.\tag{5.4}
    \end{equation*}
    Then the product of the latter two functions is given by the Cauchy product
    \begin{align*}
        g(-z)x^{-z}&=\sum_{j=0}^\infty\sum_{i=0}^j(-1)^j\frac{g^{(i)}(k)\log^{j-i}(x)}{i!(j-i)!}x^k(z+k)^j\\
        &=x^k\sum_{j=0}^\infty\frac{(-1)^j}{j!}\left[\left(\frac{d}{dz}+\log(x)\right)^jg(z)\right]_{z=k}(z+k)^j.\tag{5.5}
    \end{align*}
    It follows that the residue of $h(z)g(-z)x^{-z}$ at $z=-k$ is given by
    \begin{align*}
        \mymathop{Res}_{z=-k}(h(z)g(-z)x^{-z})&=x^k\sum_{n=-N_k}^{-1}c^{(k)}_n\frac{(-1)^{-n-1}}{(-n-1)!}\left[\left(\frac{d}{dz}+\log(x)\right)^{-n-1}g(z)\right]_{z=k}\\
        &=x^k\sum_{n=0}^{N_k-1}c^{(k)}_{n-N_k}\frac{(-1)^{N_k-n-1}}{\Gamma(N_k-n)}\left[\left(\frac{d}{dz}+\log(x)\right)^{N_k-n-1}g(z)\right]_{z=k}.\tag{5.6}
    \end{align*}
    Thus,
    \begin{equation*}
        \frac{1}{2\pi i}\int_C h(z)g(-z)x^{-z}dx=\sum_{k=0}^\infty\sum_{n=0}^{N_k-1}c^{(k)}_{n-N_k}\frac{(-1)^{N_k-n-1}}{\Gamma(N_k-n)}\left[\left(\frac{d}{dz}+\log(x)\right)^{N_k-n-1}g(z)\right]_{z=k}x^k,\tag{5.7}
    \end{equation*}
    from which the Mellin inversion theorem produces
    \begin{align*} \int_0^\infty x^{s-1}\sum_{k=0}^\infty \sum_{n=0}^{N_k-1}\frac{(-1)^{N_k-n-1}}{\Gamma(N_k-n)}c^{(k)}_{n-N_k}\left[\left(\frac{d}{dz}+\log(x)\right)^{N_k-n-1}g(z)\right]_{z=k}x^kdx=h(s)g(-s).\tag{5.8}
\end{align*}
\end{proof}

Observe that if $h$ has only simple poles at the non-positive integers, then its $m$-th derivative $h^{(m)}$ has poles of order $m+1$. That is, the derivative has the effect of increasing the order of poles by one. Moreover, with each derivative, no new coefficients are added to the principal part of the Laurent series expansion, and the coefficient from the simple pole (the residue) gets shifted down an index. Explicitly, if we expand $h$ around $z=a$, so that we have $h(z)=b_{-1}(z-a)^{-1}+O(1)$, it then follows that $h'(z)=-b_{-1}(z-a)^{-2}+O(1)$, and more generally, $h^{(m)}(z)=(-1)^mm!b_{-1}(z-a)^{-m-1}+O(1)$. Thus if $a=-k$, the coefficient $c^{(k)}_{-m-1}$ in Theorem~\ref{thm:higher-order} is given by $(-1)^mm!b_{-1}$ and all other coefficients of the principal part of the expansion around $z=-k$ vanish. We therefore obtain the following corollary.

\begin{corollary}\label{cor:deriv-cor}
Let $g(z)$ be analytic on the half-plane $H(\delta)=\{z\in\mathbb{C}|\textnormal{Re}(z)\ge-\delta\}$ for some $0<\delta<1$, and let $h(z)$ be meromorphic with simple poles the non-positive integers $k=0,-1,-2,\ldots$. Then under suitable growth conditions,
\begin{align*} \int_0^\infty x^{s-1}\sum_{k=0}^\infty \mymathop{Res}_{z=-k}(h(z))\left[\left(\frac{d}{dz}+\log(x)\right)^{m}g(z)\right]_{z=k}x^kdx=h^{(m)}(s)g(-s)\tag{5.9}
\end{align*}
holds for all $0<\textnormal{Re}(s)<\delta$.
\end{corollary}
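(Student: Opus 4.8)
The plan is to apply Theorem~\ref{thm:higher-order} directly to the derivative $h^{(m)}$ in place of $h$, and then to collapse the resulting double sum using the pole-structure facts recorded in the paragraph immediately preceding this corollary. First I would note that since $h$ has simple poles at the non-positive integers, each differentiation raises the pole order by one, so $h^{(m)}$ has a pole of order exactly $N_k=m+1$ at every $z=-k$; Theorem~\ref{thm:higher-order} therefore applies to $h^{(m)}$ with this uniform order.

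Next I would pin down the principal-part coefficients of $h^{(m)}$. Writing the Laurent expansion of $h$ about $z=-k$ as $h(z)=b_{-1}(z+k)^{-1}+O(1)$ with $b_{-1}=\mymathop{Res}_{z=-k}(h(z))$, repeated differentiation gives $h^{(m)}(z)=(-1)^m m!\,b_{-1}(z+k)^{-m-1}+O(1)$. Hence, among the coefficients $c^{(k)}_{-1},\ldots,c^{(k)}_{-N_k}$ of the principal part of $h^{(m)}$ around $z=-k$, only $c^{(k)}_{-(m+1)}=(-1)^m m!\,\mymathop{Res}_{z=-k}(h(z))$ is nonzero.

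Then I would substitute $N_k=m+1$ into the inner sum of Theorem~\ref{thm:higher-order}. The factor $c^{(k)}_{n-N_k}$ vanishes unless $n-N_k=-(m+1)$, i.e. unless $n=0$, so only the $n=0$ term survives. Its scalar prefactor is $\frac{(-1)^{N_k-1}}{\Gamma(N_k)}\,c^{(k)}_{-(m+1)}=\frac{(-1)^m}{m!}\cdot(-1)^m m!\,\mymathop{Res}_{z=-k}(h(z))=\mymathop{Res}_{z=-k}(h(z))$, and the differential operator appears at order $N_k-1=m$. The double sum therefore collapses to $\sum_{k=0}^\infty\mymathop{Res}_{z=-k}(h(z))\left[\left(\frac{d}{dz}+\log(x)\right)^m g(z)\right]_{z=k}x^k$, while the right-hand side $h(s)g(-s)$ becomes $h^{(m)}(s)g(-s)$, which is precisely the asserted identity.

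The substance of the argument is entirely contained in Theorem~\ref{thm:higher-order} and in the coefficient computation above; what remains is bookkeeping. I expect the only delicate points to be the index arithmetic isolating $n=0$ and the sign cancellation $(-1)^m\cdot(-1)^m=1$, together with the observation that differentiation introduces no new principal-part coefficients. Convergence and the vanishing of the arc term are, exactly as in Theorem~\ref{thm:higher-order}, subsumed under the ``suitable growth conditions'' hypothesis, so no separate estimate is required.
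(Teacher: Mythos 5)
Your proposal is correct and follows essentially the same route as the paper: apply Theorem~\ref{thm:higher-order} to $h^{(m)}$, observe that the principal part of $h^{(m)}$ about $z=-k$ consists of the single term $(-1)^m m!\,\mymathop{Res}_{z=-k}(h(z))(z+k)^{-m-1}$, and collapse the inner sum to the $n=0$ term with the sign and factorial cancellation you describe. The index bookkeeping and the resulting identity match the paper's proof exactly.
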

\begin{proof}
    Since $h$ is meromorphic with simple poles, we may expand it around $z=-k$ so that $h(z)=\mymathop{Res}_{z=-k}(h(z))(z+k)^{-1}+O(1)$, and the $m$-th derivative satisfies $h^{(m)}(z)=(-1)^mm!\mymathop{Res}_{z=-k}(h(z))(z+k)^{-m-1}+O(1)$. Thus, the coeffcient $c^{(k)}_{-m-1}$ of $h^{(m)}$ is given by $(-1)^mm!\mymathop{Res}_{z=-k}(h(z))$ and all other coefficients $c_j$ with $j<0$ vanish. By Theorem~\ref{thm:higher-order}, we have
    \begin{align*} h^{(m)}(s)g(-s)&=\int_0^\infty x^{s-1}\sum_{k=0}^\infty \frac{(-1)^m}{\Gamma(m+1)}c_{-m-1}\left[\left(\frac{d}{dz}+\log(x)\right)^{m}g(z)\right]_{z=k}x^kdx\\
    &=\int_0^\infty x^{s-1}\sum_{k=0}^\infty \mymathop{Res}_{z=-k}(h(z))\left[\left(\frac{d}{dz}+\log(x)\right)^{m}g(z)\right]_{z=k}x^kdx,\tag{5.10}
\end{align*}
and this completes the proof.
\end{proof}

This corollary has several interesting special cases. For example, Corollary~\ref{cor:digamma} can now be generalized to any derivative of the digamma function.

\begin{corollary}
Let $g(z)$ be analytic on the half-plane $H(\delta)=\{z\in\mathbb{C}|\textnormal{Re}(z)\ge-\delta\}$ for some $0<\delta<1$. Then
\begin{align*} \int_0^\infty x^{s-1}\sum_{k=0}^\infty& \bigg[\bigg(\frac{d}{dz}+\log(x)\bigg)^mg(z)\bigg]_{z=k}x^kdx=-\psi^{(m)}(s)g(-s) \tag{5.11}
\end{align*}
holds for all $0<\textnormal{Re}(s)<\delta$.
\end{corollary}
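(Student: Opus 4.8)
The plan is to apply Corollary~\ref{cor:deriv-cor} with the single choice $h(z)=\psi(z)$. First I would recall that the digamma function is meromorphic with simple poles precisely at the non-positive integers, so it falls under the hypotheses of that corollary. The crucial input is the value of its residue at each such pole. This is already implicit in the proof of Corollary~\ref{cor:digamma}, where the residue computation $\mymathop{Res}_{z=-k}(\psi(z)g(-z)x^{-z})=-g(k)x^k$ is recorded; stripping off the analytic factor $g(-z)x^{-z}$, whose value at $z=-k$ is $g(k)x^k$, shows that $\mymathop{Res}_{z=-k}(\psi(z))=-1$ for every $k\ge 0$.

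With this residue in hand, the result follows by direct substitution. Corollary~\ref{cor:deriv-cor} states that for any meromorphic $h$ with simple poles at the non-positive integers,
\begin{align*} \int_0^\infty x^{s-1}\sum_{k=0}^\infty \mymathop{Res}_{z=-k}(h(z))\left[\left(\frac{d}{dz}+\log(x)\right)^{m}g(z)\right]_{z=k}x^kdx=h^{(m)}(s)g(-s).\end{align*}
Setting $h=\psi$, the residue $\mymathop{Res}_{z=-k}(\psi(z))=-1$ is independent of $k$ and therefore factors out of the sum as an overall sign, while the right-hand side becomes $\psi^{(m)}(s)g(-s)$. Moving this sign to the other side produces exactly the claimed identity, since $-(-1)=-1$ is absorbed into the stated minus sign in front of $\psi^{(m)}(s)g(-s)$.

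Because this corollary lives in the heuristic Section~V, there is essentially no analytic obstacle to overcome: the ``suitable growth conditions'' clause inherited from Theorem~\ref{thm:higher-order} and Corollary~\ref{cor:deriv-cor} is exactly what absorbs the convergence of the series and the vanishing of the arc term in the Mellin inversion step. The only point requiring genuine verification is the value of the residue of $\psi$ at the non-positive integers, which I would confirm either from the computation already appearing in the proof of Corollary~\ref{cor:digamma} or, independently, from the standard series representation $\psi(z)=-\gamma+\sum_{k=0}^\infty\left(\tfrac{1}{k+1}-\tfrac{1}{z+k}\right)$, in which the term $-1/(z+k)$ manifestly produces a simple pole of residue $-1$ at $z=-k$. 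Thus the entire argument is a one-line specialization of the preceding corollary together with this elementary residue fact.
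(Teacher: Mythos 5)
Your proposal is correct and follows essentially the same route as the paper: specialize Corollary~\ref{cor:deriv-cor} to $h=\psi$ and use that $\mymathop{Res}_{z=-k}(\psi(z))=-1$ to account for the minus sign on the right-hand side. In fact you are more careful than the paper's one-line proof, which (evidently as a typo) asserts the residue is $1$; your value $-1$ is the one consistent both with the standard series for $\psi$ and with the residue computation already recorded in the proof of Corollary~\ref{cor:digamma}, and it is what the stated sign in \textnormal{(5.11)} requires.
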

\begin{proof}
     The residue of $\psi(z)$ at $z=-k$ is $1$. The corollary now follows from Corollary~\ref{cor:deriv-cor}.
\end{proof}

Just as for Corollary~\ref{cor:digamma}, this result seems to be more of an interesting artifact of the more general theorem, as finding a suitable $g$ for which the theorem holds seems to be challenging in practice. Another special case of Corollary~\ref{cor:deriv-cor} is given by choosing $h(z)=\frac{\pi}{\sin(\pi z)}$.

\begin{corollary}
Let $g(z)$ be analytic on the half-plane $H(\delta)=\{z\in\mathbb{C}|\textnormal{Re}(z)\ge-\delta\}$ for some $0<\delta<1$. Then
\begin{align*} \int_0^\infty x^{s-1}\sum_{k=0}^\infty(-1)^k \bigg[\bigg(\frac{d}{dz}+\log(x)\bigg)^mg(z)\bigg]_{z=k}x^kdx=\frac{d^m}{ds^m}\left(\frac{\pi}{\sin(\pi s)}\right)g(-s)\tag{5.12}
\end{align*}
holds for all $0<\textnormal{Re}(s)<\delta$.
\end{corollary}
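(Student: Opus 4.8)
The plan is to recognize this identity as the special case of Corollary~\ref{cor:deriv-cor} obtained by taking $h(z)=\pi/\sin(\pi z)$. First I would check that this choice meets the hypotheses of that corollary: the function $\pi/\sin(\pi z)$ is meromorphic on all of $\mathbb{C}$, and since $\sin(\pi z)$ vanishes to first order at each integer, its reciprocal has simple poles precisely at the integers, and in particular at the non-positive integers $k=0,-1,-2,\ldots$, as required.

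Next I would compute the residue at $z=-k$. Setting $w=z+k$ and using the identity $\sin(\pi z)=\sin(\pi(w-k))=(-1)^k\sin(\pi w)$, one has near $z=-k$
\begin{align*}
\frac{\pi}{\sin(\pi z)}=\frac{(-1)^k\pi}{\sin(\pi(z+k))}=\frac{(-1)^k}{z+k}+O(1),
\end{align*}
so that $\mymathop{Res}_{z=-k}(\pi/\sin(\pi z))=(-1)^k$.

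Finally I would substitute these data into Corollary~\ref{cor:deriv-cor}. With $\mymathop{Res}_{z=-k}(h(z))=(-1)^k$ appearing in the summand and $h^{(m)}(s)=\tfrac{d^m}{ds^m}\bigl(\pi/\sin(\pi s)\bigr)$ on the right-hand side, the conclusion of that corollary becomes verbatim the stated identity, valid on $0<\textnormal{Re}(s)<\delta$.

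There is no genuine obstacle here. The growth hypotheses are absorbed into the phrase ``suitable growth conditions'' inherited from Corollary~\ref{cor:deriv-cor}, and the surrounding section explicitly forgoes their verification, so the argument reduces to the elementary residue computation above. The only point meriting attention is to confirm that the computed residue $(-1)^k$ matches the alternating factor already written in the summand of the claimed formula, which it does.
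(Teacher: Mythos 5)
Your proposal is correct and follows exactly the paper's own route: the paper likewise invokes Corollary~\ref{cor:deriv-cor} with $h(z)=\pi/\sin(\pi z)$ and the residue $\mymathop{Res}_{z=-k}(\pi/\sin(\pi z))=(-1)^k$, which you verify in slightly more detail. No differences worth noting.
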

\begin{proof}
     The residue of $\frac{\pi}{\sin(\pi z)}$ at $z=-k$ is $(-1)^k$. The corollary now follows from Corollary~\ref{cor:deriv-cor}.
\end{proof}

This case is a very natural generalization of Ramanujan's theorem in the original form \eqref{eq:rmt}. Recall that the substitution $g(z)\to g(z)/\Gamma(z+1)$ and an application of Euler's reflection formula produces the form \eqref{eq:rmt-gamma}. Another special case given by choosing $h(z)=\Gamma(z)$ can be seen as a natural generalization of this latter form of Ramanujan's master theorem.
\begin{corollary}
Let $g(z)$ be analytic on the half-plane $H(\delta)=\{z\in\mathbb{C}|\textnormal{Re}(z)\ge-\delta\}$ for some $0<\delta<1$. Then
\begin{align*} \int_0^\infty x^{s-1}\sum_{k=0}^\infty\frac{(-1)^k}{k!} \bigg[\bigg(\frac{d}{dz}+\log(x)\bigg)^mg(z)\bigg]_{z=k}x^kdx=\Gamma^{(m)}(s)g(-s)\tag{5.13}
\end{align*}
holds for all $0<\textnormal{Re}(s)<\delta$.
\end{corollary}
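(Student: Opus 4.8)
The plan is to obtain this identity as an immediate specialization of Corollary~\ref{cor:deriv-cor}, taking $h(z)=\Gamma(z)$. The gamma function is meromorphic on $\mathbb{C}$ with simple poles located precisely at the non-positive integers $z=0,-1,-2,\ldots$, so it satisfies the structural hypothesis of that corollary, and its $m$-th derivative $h^{(m)}(s)=\Gamma^{(m)}(s)$ supplies exactly the right-hand side we are after. So the work reduces entirely to matching the summand.

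The only genuine input I need is the residue of $\Gamma$ at each pole. I would record the standard fact that $\mymathop{Res}_{z=-k}(\Gamma(z))=(-1)^k/k!$, which follows at once by iterating the functional equation $\Gamma(z+1)=z\Gamma(z)$ to write $\Gamma(z)=\Gamma(z+k+1)/\big(z(z+1)\cdots(z+k)\big)$ near $z=-k$ and reading off the coefficient of the simple pole $1/(z+k)$. Substituting $\mymathop{Res}_{z=-k}(h(z))=(-1)^k/k!$ into the summand of Corollary~\ref{cor:deriv-cor} reproduces the factor $(-1)^k/k!$ in front of the bracketed differential operator, while $h^{(m)}(s)$ becomes $\Gamma^{(m)}(s)$ on the right. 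The corollary then delivers the claimed formula directly, just as the digamma and reciprocal-sine special cases were handled with a one-line residue computation.

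Since the substitution is mechanical, there is essentially no algebraic obstacle; the only caveat is the phrase ``under suitable growth conditions'' inherited from Theorem~\ref{thm:higher-order} and Corollary~\ref{cor:deriv-cor}. Strictly, one should confirm that $g$ together with $\Gamma$ forces the arc term of the contour integral to vanish in the limit and that the resulting series converges on the relevant region. However, as \S V explicitly forgoes a rigorous convergence analysis and presents its identities heuristically, I would inherit those hypotheses rather than re-derive them, noting only that the rapid decay of $1/\Gamma$ along vertical lines is what makes such growth conditions natural in this setting.
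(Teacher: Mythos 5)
Your proposal matches the paper's proof exactly: both apply Corollary~\ref{cor:deriv-cor} with $h(z)=\Gamma(z)$ and use the residue $\mymathop{Res}_{z=-k}(\Gamma(z))=(-1)^k/k!$ to read off the summand. The only difference is that you also justify the residue formula via the functional equation, which the paper takes as known.
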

\begin{proof}
     The residue of $\Gamma(z)$ at $z=-k$ is $(-1)^k/k!$. The corollary now follows from Corollary~\ref{cor:deriv-cor}.
\end{proof}

Note that by setting $g=1$ the last two corollaries give integral representations of the derivatives of the gamma and cosecant functions. The next corollary takes $h$ to be the square of the gamma function and produces an integral representation involving Euler's constant $\gamma$ and the harmonic numbers $H_k$.
\begin{corollary}\label{cor:gamma-squared}
Let $g(z)$ be analytic on the half-plane $H(\delta)=\{z\in\mathbb{C}|\textnormal{Re}(z)\ge-\delta\}$ for some $0<\delta<1$. Then
\begin{align*} \int_0^\infty \sum_{k=0}^\infty&\frac{g'(k)-(2\gamma -2H_k+\log(x))g(k)}{(k!)^2}x^{k+s-1}dx=(\Gamma(s))^2g(-s)  \tag{5.14}
\end{align*}
holds for all $0<\textnormal{Re}(s)<\delta$.
\end{corollary}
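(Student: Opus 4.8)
The plan is to specialize Theorem~\ref{thm:higher-order} to the choice $h(z)=(\Gamma(z))^2$. Since $\Gamma(z)$ has a simple pole at each non-positive integer, its square has a pole of order exactly $N_k=2$ at every $z=-k$, so the higher-order formula applies with the inner sum running only over $n=0,1$. The entire computation then reduces to identifying the two principal-part coefficients $c^{(k)}_{-1}$ and $c^{(k)}_{-2}$ of $(\Gamma(z))^2$ about $z=-k$ and substituting them into the formula of Theorem~\ref{thm:higher-order}.

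The key step is the local expansion of the gamma function at a negative integer. First I would record the standard two-term Laurent expansion
\[
\Gamma(z)=\frac{(-1)^k}{k!}\left[\frac{1}{z+k}+\psi(k+1)+O(z+k)\right],\qquad \psi(k+1)=H_k-\gamma,
\]
which follows by induction from the residue $\mymathop{Res}_{z=-k}(\Gamma(z))=(-1)^k/k!$ together with the functional equation $\Gamma(z+1)=z\Gamma(z)$, or may simply be read off from the digamma values at the positive integers. Squaring this expansion and collecting the $(z+k)^{-2}$ and $(z+k)^{-1}$ terms gives
\[
c^{(k)}_{-2}=\frac{1}{(k!)^2},\qquad c^{(k)}_{-1}=\frac{2(H_k-\gamma)}{(k!)^2}.
\]

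With these coefficients in hand, I would set $N_k=2$ in the summand of Theorem~\ref{thm:higher-order}. The $n=0$ term carries the factor $c^{(k)}_{-2}\,(-1)^{1}/\Gamma(2)$ together with the first-order operator $[(\tfrac{d}{dz}+\log x)g]_{z=k}=g'(k)+\log(x)g(k)$, while the $n=1$ term carries $c^{(k)}_{-1}\,(-1)^{0}/\Gamma(1)$ together with $[(\tfrac{d}{dz}+\log x)^0 g]_{z=k}=g(k)$. Adding the two contributions and factoring out $1/(k!)^2$ collapses the $n$-sum into the single expression appearing in the statement, while the right-hand side of Theorem~\ref{thm:higher-order} becomes $h(s)g(-s)=(\Gamma(s))^2 g(-s)$, as claimed.

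The only genuine obstacle is the bookkeeping in the Laurent/Taylor expansions: one must carry the gamma expansion to first order rather than merely to the residue, so that the harmonic-number and Euler-constant terms are captured in $c^{(k)}_{-1}$, and one must track the several competing signs coming from $(-1)^{N_k-n-1}$, from the Taylor coefficients of $g(-z)$, and from the expansion of $x^{-z}$. As the opening of this section notes, the growth hypotheses guaranteeing convergence and the vanishing of the arc term are taken as given (``under suitable growth conditions''), so no analytic estimates beyond those already invoked in Theorem~\ref{thm:higher-order} are required here.
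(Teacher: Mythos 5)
Your approach is essentially the paper's: compute the order-two Laurent data of $(\Gamma(z))^2$ at $z=-k$ and substitute into Theorem~\ref{thm:higher-order}, the only difference being that you derive the principal-part coefficients by hand where the paper reports the residue ``with the aid of a computer.'' Your Laurent expansion and your coefficients are correct: $c^{(k)}_{-2}=1/(k!)^2$ and $c^{(k)}_{-1}=2(H_k-\gamma)/(k!)^2$.

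However, your final claim --- that adding the $n=0$ and $n=1$ contributions ``collapses the $n$-sum into the single expression appearing in the statement'' --- is not what your own computation yields. The $n=0$ term is $-\frac{1}{(k!)^2}\left(g'(k)+\log(x)g(k)\right)x^k$ and the $n=1$ term is $\frac{2(H_k-\gamma)}{(k!)^2}g(k)x^k$, so the summand is
\begin{equation*}
\frac{-g'(k)-\left(2\gamma-2H_k+\log(x)\right)g(k)}{(k!)^2}\,x^{k},
\end{equation*}
in which $g'(k)$ carries a \emph{minus} sign, whereas (5.14) has $+g'(k)$. This is not an error in your Laurent data but a sign error in the printed corollary (and in the paper's computer-assisted residue (5.15)). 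The paper's own subsequent consistency check confirms this: taking $g(z)=\sin(\pi z)\Gamma(z+1)$ gives $g(k)=0$, $g'(k)=\pi(-1)^k k!$ and $g(-s)=-\sin(\pi s)\Gamma(1-s)$, so the right-hand side equals $(\Gamma(s))^2 g(-s)=-\pi\Gamma(s)$, which matches the left-hand side $\int_0^\infty x^{s-1}\sum_k \frac{\mp\pi(-1)^k}{k!}x^k\,dx=\mp\pi\Gamma(s)$ only if the $g'(k)$ term appears with the minus sign. So either you made a compensating sign slip when ``adding the two contributions,'' or you did not actually carry out that addition; in either case, completing the arithmetic shows you have proved the sign-corrected identity rather than (5.14) as stated, and your write-up should say so explicitly.
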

\begin{proof}
    Consider the square of the Gamma function. With the aid of a computer, the residue calculation yields
\begin{align*}
    &\mymathop{Res}_{z=-k}((\Gamma(z))^2g(-z)x^{-z})=\frac{(g'(k)-2\gamma g(k)+2H_kg(k)-\log(x)g(k))}{(k!)^2}x^k,\tag{5.15}
\end{align*}
where $\gamma$ is Euler's constant and $H_k$ is the $k$-th harmonic number. Substituting the above expression in Theorem~\ref{thm:higher-order} yields the desired result.
\end{proof}

With $g=1$, we obtain an integral representation for the gamma function given by
\begin{equation*}
    (\Gamma(s))^2=-\int_0^\infty\sum_{k=0}^\infty\frac{2\gamma-2H_k+\log(x)}{(k!)^2}x^{k+s-1}dx, \tag{5.16}
\end{equation*}
and specializing to $s=1/2$, we recover an integral representation of $\pi$ that involves Euler's constant and the harmonic numbers. Explicitly, we have
\begin{equation*}
    \pi=-\int_0^\infty\sum_{k=0}^\infty\frac{2\gamma-2H_k+\log(x)}{(k!)^2}x^{k-1/2}dx. \tag{5.17}
\end{equation*}
The sum in the integrand can be evaluated explicitly in terms of the modified Bessel function of the second kind, producing
\begin{equation*}
    \frac{\pi}{2}=\int_0^\infty\frac{K_0(2\sqrt{x})}{\sqrt{x}}dx, \tag{5.18}
\end{equation*}
which can be verified numerically with Mathematica. We again recover Bernoulli's integral representation of the gamma function by setting $g(z)=\sin(\pi z)\Gamma(z+1)$, in which case $g(k)=0$ and $g'(k)=\pi(-1)^k k!$, and we have
\begin{equation*}
    \Gamma(s)=\int_0^\infty x^{s-1}e^{-x}dx.\tag{5.19}
\end{equation*}

Another option available to us is to replace $\pi/\sin(\pi s)$ with its $m$-th power $\pi^m/\sin^m(\pi s)$. The residue computation becomes much more complicated because the poles at the non-positive integers are no longer simple; they have order $m$. However, by evaluating the first several cases numerically, we are able to formulate a conjecture for this scenario. Define a polynomial sequence $P_m(x)$ recursively by $P_1(x)=1$, $P_2(x)=x$, and
\begin{align*}
    P_m(x):=(x^2+(m-2)^2\pi^2)P_{m-2}(x) \tag{5.20}
\end{align*}
for $m>2$. These polynomials appear in a work of H. Airault on Fourier transform computations related to hyperbolic measures \cite{0}.

\begin{conjecture} Let $g(z)$ be analytic on the half-plane $H(\delta)=\{z\in\mathbb{C}|\textnormal{Re}(z)\ge-\delta\}$ for some $0<\delta<1$  and let $m$ be a positive integer. Then
\begin{align*} \int_0^\infty x^{s-1}\sum_{n=0}^\infty (-1)^{mn}\left[P_m\left(\frac{d}{dz}+\log(x)\right)g(z)\right]_{z=n}x^ndx&=\frac{(-1)^{m-1}(m-1)!\pi^m}{\sin^{m}(\pi s)}g(-s)\tag{5.21}
\end{align*}
holds for all $0<\textnormal{Re}(s)<\delta$ and some suitable growth conditions on $g$.
\end{conjecture}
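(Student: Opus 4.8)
The plan is to apply Theorem~\ref{thm:higher-order} with the choice $h(z)=\pi^m/\sin^m(\pi z)$, which is meromorphic with a pole of order $m$ at every non-positive integer, so that $N_k=m$ for all $k$. Under the unspecified ``suitable growth conditions,'' that theorem yields
\[
\int_0^\infty x^{s-1}\sum_{k=0}^\infty \mymathop{Res}_{z=-k}\left(\frac{\pi^m}{\sin^m(\pi z)}g(-z)x^{-z}\right)dx=\frac{\pi^m}{\sin^m(\pi s)}g(-s),
\]
so the whole content of the statement is the evaluation of the residue on the left. Since $\sin(\pi z)=(-1)^k\sin(\pi w)$ with $w=z+k$, I have $\pi^m/\sin^m(\pi z)=(-1)^{mk}f_m(w)$ where $f_m(w):=\pi^m/\sin^m(\pi w)$, and, reusing the Cauchy-product expansion derived in the proof of Theorem~\ref{thm:higher-order},
\[
g(-z)x^{-z}=x^k\sum_{j=0}^\infty\frac{(-1)^j}{j!}\left[\left(\frac{d}{dz}+\log(x)\right)^jg(z)\right]_{z=k}w^j.
\]
Thus the residue equals $(-1)^{mk}x^k$ times the coefficient of $w^{-1}$ in the product of $f_m(w)$ with this series, and everything reduces to understanding how $f_m$ pairs against monomials in $w$.

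The crux is the identity
\[
\mymathop{Res}_{w=0}\left(f_m(w)e^{tw}\right)=\frac{P_m(t)}{(m-1)!},
\]
which I would prove by induction on $m$, matching the defining recursion $P_m(t)=(t^2+(m-2)^2\pi^2)P_{m-2}(t)$. The base cases $m=1,2$ follow from $f_1=w^{-1}+O(w)$ and $f_2=w^{-2}+\tfrac{\pi^2}{3}+O(w^2)$, giving residues $1=P_1/0!$ and $t=P_2/1!$. For the inductive step I would use the second-order relation for powers of the cosecant,
\[
(m-1)(m-2)f_m=f_{m-2}''+(m-2)^2\pi^2f_{m-2},
\]
which comes from $\tfrac{d^2}{dw^2}\csc^n(\pi w)=n(n+1)\pi^2\csc^{n+2}(\pi w)-n^2\pi^2\csc^n(\pi w)$ (differentiate twice using $\cot^2=\csc^2-1$) with $n=m-2$. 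Pairing both sides with $e^{tw}$ and integrating by parts twice on a small closed contour about $w=0$ — the boundary terms vanish because the residue of any derivative is zero — gives $\mymathop{Res}_{w=0}(f_{m-2}''e^{tw})=t^2\mymathop{Res}_{w=0}(f_{m-2}e^{tw})$, whence
\[
(m-1)(m-2)\mymathop{Res}_{w=0}\left(f_me^{tw}\right)=\left(t^2+(m-2)^2\pi^2\right)\mymathop{Res}_{w=0}\left(f_{m-2}e^{tw}\right).
\]
After clearing factorials this is exactly the recursion for $P_m$, completing the induction.

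With the lemma in hand I would finish by bookkeeping the signs. Because the expansion of $g(-z)x^{-z}$ carries a factor $(-1)^j$, the relevant generating function is $e^{-tw}$ rather than $e^{tw}$; since $P_m$ has parity $(-1)^{m-1}$, the lemma gives $\mymathop{Res}_{w=0}(f_m(w)e^{-tw})=(-1)^{m-1}P_m(t)/(m-1)!$. Reading this polynomial identity coefficientwise and replacing $t^i$ by $[(\tfrac{d}{dz}+\log(x))^ig(z)]_{z=k}$ produces
\[
\mymathop{Res}_{z=-k}\left(\frac{\pi^m}{\sin^m(\pi z)}g(-z)x^{-z}\right)=\frac{(-1)^{mk}(-1)^{m-1}}{(m-1)!}\left[P_m\left(\frac{d}{dz}+\log(x)\right)g(z)\right]_{z=k}x^k.
\]
Substituting this into the first display and multiplying both sides by $(-1)^{m-1}(m-1)!$ turns the summand into $(-1)^{mk}[P_m(\cdots)g]_{z=k}x^k$ and the right-hand side into $(-1)^{m-1}(m-1)!\,\pi^m/\sin^m(\pi s)\,g(-s)$, which is precisely the claimed formula. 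I expect this algebraic core to be completely rigorous; the genuine obstacle — and the reason the statement is only a conjecture — is analytic, namely isolating explicit growth hypotheses on $g$ that force the arc term of the contour integral to vanish for the order-$m$ pole structure of $\csc^m$ and that justify interchanging the summation and the integration. This is the same difficulty flagged after \eqref{eq:formula-type}, now aggravated by the higher-order poles.
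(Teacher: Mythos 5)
Your derivation is correct, and it goes strictly beyond what the paper does: the paper offers no proof of this statement at all, presenting it as a conjecture extracted from numerical evaluation of the first several cases and the observed match with Airault's polynomials. You instead reduce it to Theorem~\ref{thm:higher-order} with $h(z)=\pi^m/\sin^m(\pi z)$ and $N_k=m$, and supply the one genuinely new ingredient, the closed form $\mymathop{Res}_{w=0}\left(\pi^m\csc^m(\pi w)e^{tw}\right)=P_m(t)/(m-1)!$, proved by induction through the relation $(m-1)(m-2)f_m=f_{m-2}''+(m-2)^2\pi^2f_{m-2}$ for $f_m(w)=\pi^m\csc^m(\pi w)$. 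I verified that identity, the base cases $m=1,2$, the integration-by-parts step $\mymathop{Res}_{w=0}(f_{m-2}''e^{tw})=t^2\mymathop{Res}_{w=0}(f_{m-2}e^{tw})$, the parity $P_m(-t)=(-1)^{m-1}P_m(t)$, and the sign bookkeeping including the factor $(-1)^{mk}$ coming from $\sin^m(\pi z)=(-1)^{mk}\sin^m(\pi(z+k))$; the resulting normalization $(-1)^{m-1}(m-1)!$ agrees with the conjectured right-hand side, and your residue formula reproduces, for instance, the paper's $m=2$ evaluation $\sum_n\left[P_2\left(\tfrac{d}{dz}+\log x\right)\tfrac{1}{\Gamma(z+1)}\right]_{z=n}x^n=-e^x\Gamma(0,x)$. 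What your approach buys is a proof of the algebraic core of the statement, effectively promoting the conjecture to the same (admittedly heuristic) status as Theorem~\ref{thm:higher-order} itself; what the paper's purely numerical route buys is only evidence. The gap you flag is exactly the gap that remains: explicit growth hypotheses on $g$ forcing the arc contribution to vanish for the order-$m$ poles of $\csc^m$ and justifying the interchange of summation and integration, which is the same analytic issue the paper declines to address throughout that section.
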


Let us give several examples using this conjecture. By setting $g=1$, we recover an interesting integral representation for the $m$-th power of the cosecant function given by
\begin{align*} \int_0^\infty x^{s-1}\sum_{n=0}^\infty (-1)^{mn}P_m\left(\log(x)\right)x^ndx&=(-1)^{m-1}(m-1)!\frac{\pi^m}{\sin^{m}(\pi s)}.\tag{5.22}
\end{align*}
Letting $g(z)=1/\Gamma(z+1)$, we instead find that
\begin{align*} \int_0^\infty x^{s-1}\sum_{n=0}^\infty (-1)^{mn}\left[P_m\left(\frac{d}{dz}+\log(x)\right)\frac{1}{\Gamma(z+1)}\right]_{z=n}x^ndx&=\frac{(-1)^{m-1}(m-1)!\pi^m}{\sin^{m}(\pi s)\Gamma(1-s)}.\tag{5.23}
\end{align*}
For the special case of $m=2$ (the $m=1$ case is Ramanujan's master theorem), Mathematica is able to evaluate the summation explicitly, producing
\begin{align*}
    \sum_{n=0}^\infty \left[P_2\left(\frac{d}{dz}+\log(x)\right)\frac{1}{\Gamma(z+1)}\right]_{z=n}x^n=-e^x\Gamma(0,x),\tag{5.24}
\end{align*}
where $\Gamma(a,z)$ denotes the incomplete gamma function. Thus, the conjecture induces the integral representation
\begin{align*}
    \int_0^\infty x^{s-1}e^{x}\Gamma(0,x)dx=\frac{\pi^2}{\sin^2(\pi s)\Gamma(1-s)},\tag{5.25}
\end{align*}
and after applying Euler's reflection formula, we have
\begin{align*}
    \int_0^\infty x^{s-1}e^{x}\Gamma(0,x)dx=\frac{\pi}{\sin(\pi s)}\Gamma(s). \tag{5.26}
\end{align*}
For $m>2$, Mathematica is both unable to give a closed-form evaluation for the summation and unable to evaluate the integral for arbitrary $s$.

As a final example, let $g(z)=\frac{1}{z+1}$ and set $m=2$. Then the summation is evaluated numerically as
\begin{align*}
    \sum_{n=0}^\infty \left[P_2\left(\frac{d}{dz}+\log(x)\right)\frac{1}{z+1}\right]_{z=n}x^n=\frac{\log(1-x)\log(x)+\mymathop{Li}_2(x)}{x},\tag{5.27}
\end{align*}
where $\mymathop{Li}_n(x)$ denotes the polylogarithm. Thus, the conjecture produces the evaluation
\begin{align*}
    \int_0^\infty x^{s-2}\left(\log(1-x)\log(x)+\mathrm{Li}_2(x)\right)dx=-\frac{\pi^2}{\sin^2(\pi s)}\frac{1}{1-s}.\tag{5.28}
\end{align*}
Similarly, for m = 3, we obtain
\begin{align*}
    \int_0^\infty x^{s-2}\bigg(\pi^2\log(1+x)&+\log^2(x)\log(1+x)\\
    &+2\log(x)\mathrm{Li}_2(-x)-2\mathrm{Li}_3(-x)\bigg)dx=\frac{2\pi^3}{(1-s)\sin^3(\pi s)}.\tag{5.29}
\end{align*}

\begin{center}
    {\large \textsc{\textbf{\S V. Conclusion }}}
\end{center}
In this paper, we have explored an extension of Ramanujan's master theorem by generalizing Hardy's proof to accommodate a broader class of functions. Our examination began with a review of Ramanujan's original theorem and Hardy's rigorous approach, which utilized Cauchy's residue theory and the Mellin inversion theorem. By modifying Hardy's residue calculations, we extended the theorem to a family of Ramanujan-like master theorems, which offer valuable insights and new results in the analysis of Mellin transforms.

Through this generalization, we have derived novel integral representations for meromorphic functions with simple poles at non-positive integers and demonstrated their nice properties, such as logarithmic convexity. Furthermore, we extended our results to meromorphic functions with poles of arbitrary order, uncovering several exotic integral representations that enrich our understanding of these functions.

Our results not only contribute to the theoretical development of mathematical transforms but also have practical implications across various fields. The Mellin transform's applications in computer algorithms, quantum theory, asymptotic analysis, and other domains underscore the significance of our findings. By providing a broader toolkit for evaluating Mellin transforms, our generalization offers new avenues for research and application, reflecting the enduring influence of Ramanujan's insights and the continued relevance of his mathematical legacy. We hope that these results will inspire further exploration and application, continuing the tradition of building upon the foundational work of great mathematicians like Ramanujan.
\newline

\textbf{Acknowledgement:}  ZPB thanks Christophe Vignat for providing the reference to the work of H. Airault.


\begin{thebibliography}{}
\bibitem{0} Helene Airault. Hyperbolic measures, moments and coefficients. algebra on hyperbolic functions. Journal of Functional Analysis, 255(9):2099–2145, 2008. Special issue dedicated to Paul Malliavin.
\bibitem{1} Tewodros Amdeberhan, Olivier R. Espinosa, Ivan Gonzalez, Marshall Harrison, Victor H. Moll, and Armin Straub. Ramanujan’s master theorem. The Ramanujan Journal, 29(1):103–120, 2012.
\bibitem{2} O. Atale. Analytic expressions for some Mellin transforms with their application to prime counting function and interpolation formulas for the zeta function. Palestine Journal of Mathematics, 12(1), 2023.
\bibitem{3} Bruce Berndt. Ramanujan’s Notebooks II. Springer, New York, NY,, 1985.
\bibitem{4} N. Bleistein and R.A. Handelsman. Asymptotic Expansions of Integrals. Holt,
Rinehart and Winston, 1975.
\bibitem{bradshaw2023} Zachary P. Bradshaw. Explorations in mathematical physics: Special functions in quantum theory and Feynman integrals by the method of brackets. PhD Thesis, Tulane University, 2023.
\bibitem{bradshaw2023a} Z. P. Bradshaw, I. Gonzalez, L. Jiu, V. H. Moll, and C. Vignat. Compatibility of the method of brackets with classical integration rules. Open Mathematics 21(1), 20220581, 2023.
\bibitem{5} Zachary P. Bradshaw and Christophe Vignat. An operational calculus generalization of Ramanujan’s master theorem. Journal of Mathematical Analysis and Applications, 523(2):127029, 2023.
\bibitem{6} Muhammed Aslam Chaudhry and Asghar Qadir. Extension of Hardy’s class for Ramanujan’s interpolation formula and master theorem with applications. Journal of Inequalities and Applications, 2012(1):1–13, 2012.
\bibitem{7} L. Debnath and D. Bhatta. Integral Transforms and Their Applications, Third
Edition. Taylor and Francis, 2014.
\bibitem{8} Hongming Ding, Kenneth Gross, and Donald Richards. Ramanujan’s master theorem for symmetric cones. Pacific Journal of Mathematics, 175(2):447–490, 1996.
\bibitem{27} Dragomir, Sever S, Agarwal, R. P and Barnett, Neil S (1999) Inequalities for
Beta and Gamma Functions Via Some Classical and New Integral Inequalities.
RGMIA research report collection, 2 (3). 
\bibitem{9} Benjamin Epstein. Some Applications of the Mellin Transform in Statistics. The Annals of Mathematical Statistics, 19(3):370 – 379, 1948.
\bibitem{10} Ahmed Fitouhi, Kamel Brahim, and Neji Bettaibi. On some q-versions of the Ramanujan master theorem. The Ramanujan Journal, 50(2):433–458, 2019.
\bibitem{11} A. Liam Fitzpatrick and Jared Kaplan. Unitarity and the holographic s-matrix. Journal of High Energy Physics, 2012(10), October 2012.
\bibitem{12} A. Liam Fitzpatrick, Jared Kaplan, Joao Penedones, Suvrat Raju, and Balt C. van Rees. A natural language for ads/cft correlators. Journal of High Energy Physics, 2011(11), November 2011. 17
\bibitem{13} Philippe Flajolet, Xavier Gourdon, and Philippe Dumas. Mellin transforms and asymptotics: Harmonic sums. Theoretical Computer Science, 144(1):3–58, 1995.
\bibitem{14} Philippe Flajolet and Robert Sedgewick. The Average Case Analysis of Algorithms: Mellin Transform Asymptotics. Research Report RR-2956, INRIA, 1996.
\bibitem{15} Ivan Gonzalez, Karen Kohl, Lin Jiu, and Victor H. Moll. An extension of the
method of brackets. Part 1, 2017.
\bibitem{16} Ivan Gonzalez and Victor H. Moll. Definite integrals by the method of brackets. Advances in Applied Mathematics, 45(1):50–73, 2010.
\bibitem{17} Ivan Gonzalez, Victor H. Moll, and Armin Straub. The method of brackets. part 2: Examples and applications, 2010.
\bibitem{18} Ivan Gonzalez. Method of brackets and Feynman diagram evaluation. Nuclear Physics B - Proceedings Supplements, 205-206:141–146, Aug 2010.
\bibitem{19} Ivan Gonzalez and Ivan Schmidt. Optimized negative dimensional integration method (ndim) and multiloop Feynman diagram calculation. Nuclear Physics B, 769(1):124–173, 2007.
\bibitem{20} D. Griffiths. Introduction to Elementary Particles. Wiley, 2020.
\bibitem{hardy1920} G. H. Hardy. On two theorems of F. Carlson and S. Wigert. Acta Mathematica, 42: 327–339, 1920.
\bibitem{21} G. H. Hardy. Ramanujan Twelve Lectures on Subjects Suggested By His Life and Work. Cambridge University Press, 1940.
\bibitem{22} P. A. Martin. Some applications of the Mellin transform to asymptotics of series. In Inverse Scattering and Potential Problems in Mathematical Physics, Frankfurt, 1995. Peter Lang.
\bibitem{23} I. S. Reed. The Mellin Type of Double Integral. Duke Mathematical Journal, 11(3):565 – 572, 1944.
\bibitem{24} R.J. Sasiela. Electromagnetic Wave Propagation in Turbulence: Evaluation and Application of Mellin Transforms. Springer Series on Wave Phenomena. Springer Berlin Heidelberg, 2012.
\bibitem{25} Matthew D. Schwartz. Quantum Field Theory and the Standard Model. Cambridge University Press, 2013.
18
\bibitem{26} R. Wong. Asymptotic Approximations of Integrals. Classics in Applied Mathematics. Society for Industrial and Applied Mathematics, 2001.




 \end{thebibliography}
\end{document}